\documentclass[12pt]{amsart}

\usepackage{amsmath,amssymb,amsthm,geometry}

\usepackage{mathrsfs}

\newtheorem{theorem}{Theorem}
\newtheorem{lemma}{Lemma}[section]
\newtheorem{corollary}{Corollary}[section]

\newtheorem{remark}{Remark}[section]

\newtheorem{theoremL}{Theorem}

\newtheorem{lemmaL}{Lemma}

\newtheorem{corollaryL}{Corollary}

\theoremstyle{definition}

\DeclareMathOperator{\E}{\mathbb{E}}
\DeclareMathOperator{\Var}{\mathbb{V}}

\newcommand{\R}{\mathbb{R}}
\newcommand{\N}{\mathbb{N}}
\newcommand{\Z}{\mathbb{Z}}
\newcommand{\C}{\mathbb{C}}

\begin{document}

\title{Power Partitions and Hayman Functions}

\author{Jos\'e L. Fern\'andez}
\address{Departamento de Matem\'aticas, Universidad Aut\'onoma de Madrid, Madrid, Spain and Fundaci\'on Akusmatika.}
\email{joseluis.fernandez@uam.es}

\author{V\'{\i}ctor J. Maci\'a}
\address{Departamento de An\'alisis Matem\'atico, Universidad de La Laguna, Tenerife, Spain.}
\email{victor.macia@ull.edu.es}

\subjclass[2020]{30B10, 30D20, 60E05, 60F05, 11P82, 05A17}
\keywords{Power series with nonnegative coefficients, Power series distributions,
Khinchin families, Gaussian Khinchin families, partitions, power partitions}

\begin{abstract}
We prove, within the probabilistic framework of Khinchin families, that
the generating function~$P_k$ of partitions into $k$-th powers is strongly Gaussian in the
sense of B\'aez-Duarte, and even further that it is a Hayman function. Thus the
Hardy--Ramanujan asymptotic formula for the number~$p_k(n)$ of partitions of~$n$ into
$k$-th powers which reads
\[
p_k(n) \sim \frac{\alpha_k}{n^{(3k+1)/(2k+2)}} \exp\!\Big(\beta_k\, n^{1/(k+1)}\Big), \qquad n\to\infty,
\]
where~$\alpha_k$ and~$\beta_k$ are explicit constants depending only on~$k$, follows directly from
Hayman's asymptotic formula for strongly Gaussian power series. The proof of strong
Gaussianity of $P_k$ combines a Gaussianity criterion for Khinchin families with certain bounds of
Tenenbaum, Wu and Li on the generating function; the asymptotic formula is recovered
by computing asymptotic approximations of the mean and variance of the associated
family. Analogous results are presented for the generating function $Q_k$ of partitions into distinct $k$-th powers.
\end{abstract}

\maketitle

\vspace{-1.1em}
\begin{flushright}
	\itshape Dedicated to the memory of Christian Pommerenke.
\end{flushright}

\section*{Introduction}

For each integer $n\ge 1$, a partition of~$n$ into $k$-th powers is a non-increasing sequence
of positive integers $m_1\ge m_2\ge\ldots\ge m_l\ge 1$ so that $m_1^k+\ldots+m_l^k=n$.
\smallskip

In this paper we present a direct proof, within the probabilistic framework provided
by the theory of Khinchin families, of the asymptotic formula for the number~$p_k(n)$ of
partitions of~$n$ into perfect $k$-th powers of positive integers:
\begin{equation}\tag{HR}
p_k(n) \sim \frac{\alpha_k}{n^{(3k+1)/(2k+2)}} \exp\!\Big(\beta_k\, n^{1/(k+1)}\Big), \qquad \text{as } n\to\infty,
\end{equation}
where~$\alpha_k$ and~$\beta_k$ are specific constants which depend only on~$k$, see Theorem~\ref{thm:asymptotic of pk(n)}.
\smallskip

For general partitions, i.e., for the case $k=1$, with parameters $\alpha_1=1/(4\sqrt{3})$ and
$\beta_1=\pi\sqrt{2/3}$, this is the Hardy--Ramanujan formula from~\cite{HardyRamanujan}. The formula~(HR) for general
$k\ge 1$ also appears, without proof, in~\cite[page~111]{HardyRamanujan}, with the notation~$p^s(n)$ for partitions
into $s$-th powers.
\smallskip

Wright in~\cite{Wright} obtained an asymptotic expansion of~$p_k(n)$ as $n\to\infty$ by a rather involved argument. Recently, through an expert use of the circle method of Hardy--Ramanujan--Littlewood, Vaughan in~\cite{Vaughan} has obtained, for the case $k=2$, an asymptotic
expansion of~$p_2(n)$; this argument was later generalized by Gafni in~\cite{Gafni} to cover the
general case $k\ge 1$. More recently, Tenenbaum, Wu and Li in~\cite{TWL}, see also~\cite{TWLarxiv}, have
greatly simplified the proof of the asymptotic expansion of~$p_k(n)$ by approaching the
estimation through the saddle-point method.
\smallskip

Let us be precise about the scope of the present contribution. The asymptotic formula~(HR) and, indeed, full asymptotic expansions of~$p_k(n)$ are already known through
the works just cited. Our main goal is to show that the generating function~$P_k$ of partitions
into $k$-th powers is strongly Gaussian in the sense of B\'aez-Duarte, see Theorem~\ref{thm:strongGaussianity}, and
even further that~$P_k$ is in the Hayman class, see Theorem~\ref{thm:Hayman}; the formula~(HR) then
follows as a direct consequence, via the general Hayman asymptotic formula for strongly
Gaussian Khinchin families, Theorem~\ref{thm:hayman asympt formula}. We stress that the crucial analytic estimates underpinning the proof of strong Gaussianity are not new: they rest on the
bounds of Tenenbaum, Wu and Li recorded in Lemma~\ref{lem:bounds of char de P}. The interest of the approach,
rather, lies in the fact that it recasts the problem entirely within the probabilistic framework
of Khinchin families, where the transition from generating function to coefficient asymptotics relies on  a natural  probabilistic mechanism---verification of
strong Gaussianity and computation of the mean and variance of the associated family.
Note that this framework yields asymptotic formulas such as~(HR),
but that it does not extend to full asymptotic expansions, for which the reader is referred to
the works of Wright, Gafni, and Tenenbaum, Wu and Li cited above.
\smallskip

The theory of Khinchin families originates in the work of Hayman~\cite{Hayman}, Rosenbloom~\cite{Rosenbloom}
and B\'aez-Duarte~\cite{BaezDuarte}---whose approach we follow closely here---and has been developed
at length in~\cite{CFMK}, \cite{CFMK2} and~\cite{CFMK3}, see also~\cite{MaciaThesis} and~\cite{MaciaGaussianity}. Other instances of the use of this
approach are~\cite{CandelpergherMiniconi} and, more recently,~\cite{Ikeda}.
\smallskip

Let $P_k(z)$ be the generating function of the partitions into $k$-th powers:
\[
P_k(z) = \prod_{n=1}^{\infty} \frac{1}{1-z^{n^k}} = \sum_{n=0}^{\infty} p_k(n)\, z^n, \qquad \text{for all } z\in\mathbb{D}\,;
\]
the power series above has radius of convergence $R=1$. We denote by $(X^{[k]}_t)_{t\in[0,1)}$ the
Khinchin family associated with~$P_k$. The necessary background on Khinchin families is
reviewed in Section~\ref{sec:Khinchin}. The Gaussianity of~$P_k$ (Corollary~\ref{cor:gaussianity of Pk}) and the required asymptotic
approximations of the mean and variance of its family are established in Section~\ref{sec:partitions},
where strong Gaussianity is proved in Theorem~\ref{thm:strongGaussianity}. The asymptotic formula~(HR) is
then derived in Section~\ref{sec:asymptotic}. In Section~\ref{sec:Hayman} it is shown that~$P_k$ is in the Hayman class. In Section~\ref{sec:distinct} we discuss analogous results for the generating function~$Q_k$ of partitions into distinct $k$-th powers.

\medskip

\noindent\textbf{Some notation.}
For two functions $\alpha$ and $\beta$, we say that they are asymptotically
equivalent as $t\uparrow R$, and write $\alpha(t)\sim\beta(t)$, if
\[
\lim_{t\uparrow R} \frac{\alpha(t)}{\beta(t)} = 1\,.
\]
We use $\mathbb{P}$ to denote probability defined in the appropriate space, and denote by $\E(Y)$
and $\Var(Y)$ the expectation and variance of a random variable~$Y$.
\smallskip

We use $\mathbb{D}$ to denote the unit disk in the complex plane~$\mathbb{C}$ and $\mathbb{D}(a,r)$ to denote the
disk of center $a\in\mathbb{C}$ and radius $r>0$.

\section{Khinchin families}\label{sec:Khinchin}

We collect here the basic facts about Khinchin families that will be used throughout
this paper. Comprehensive treatments of the theory can be found in~\cite{CFMK, CFMK2, CFMK3} and also
in~\cite{MaciaThesis, MaciaGaussianity}.
\smallskip

Let $\mathcal{K}$ denote the class of nonconstant power series
\[
f(z) = \sum_{n=0}^{\infty} a_n z^n,
\]
with nonnegative coefficients, positive radius of convergence $R>0$, and $a_0>0$.
\smallskip

To each $f\in\mathcal{K}$ we associate the Khinchin family $(X_t)_{t\in[0,R)}$ of probability distributions
on the nonnegative integers $\{0,1,2,\ldots\}$ defined by
\[
\mathbb{P}(X_t = n) = \frac{a_n t^n}{f(t)}, \qquad n\ge 0,\quad t\in(0,R),
\]
and completed with $X_0\equiv 0$, for $t=0$. Since $f(t)>0$ for $t\in[0,R)$, this family of
distributions is well defined.
\smallskip

Observe that $\sigma_f(t)>0$, for $t\in(0,R)$, since each $X_t$, for $t>0$, takes at least two
values.

The mean and variance of~$X_t$ are given by
\begin{equation}\label{eq:meanvar}
\begin{aligned}
m_f(t) &= \E(X_t) = \frac{\displaystyle\sum_{n=0}^{\infty} n\, a_n\, t^n}{f(t)} = \frac{t f'(t)}{f(t)}\,,\\[6pt]
\sigma_f^2(t) &= \Var(X_t) = \E(X_t^2) - \E(X_t)^2 = \frac{\displaystyle\sum_{n=0}^{\infty} n^2 a_n t^n}{f(t)} - \left(\frac{t f'(t)}{f(t)}\right)^{\!2}\\[4pt]
&= \frac{t^2 f''(t)}{f(t)} + \frac{t f'(t)}{f(t)} - \left(\frac{t f'(t)}{f(t)}\right)^{\!2} = t\, m_f'(t)\,.
\end{aligned}
\end{equation}
The last equality follows by differentiating the expression $m_f(t)=tf'(t)/f(t)$.
\smallskip

We define the normalized random variable $\breve{X}_t$ by
\[
\breve{X}_t = \frac{X_t - m_f(t)}{\sigma_f(t)}\,, \qquad \text{for any } t\in(0,R).
\]
Note that $\breve{X}_t$ is only defined for $t\in(0,R)$, since $\sigma_f(0)=0$.

\subsection{The fulcrum of a power series}\label{subsec:fulcrum}

Every function $f\in\mathcal{K}$ is non-vanishing on the
real interval $[0,R)$. Hence for any $f\in\mathcal{K}$ we can define its so-called fulcrum~$F$ in a
simply connected domain~$\Omega_f$ containing $[0,R)$ by
\[
F(z) = \ln(f(e^z)), \qquad \text{for any } z\in\Omega_f\,,
\]
where the branch of the logarithm is chosen so that $F$ is real on $(-\infty,\ln R)$.
\smallskip

If $f$ vanishes nowhere in the unit disk (as is the case for Khinchin families associated
with partitions), then we may take $\Omega_f$ as the left half-plane $\{\Re(z)<0\}$.
\smallskip

In terms of the fulcrum we have that
\[
m_f(e^s) = F'(s), \qquad \sigma_f^2(e^s) = F''(s), \qquad \text{for any } s < \ln R.
\]
The fulcrum codifies quite efficiently many probabilistic quantities pertaining to the
Khinchin family, see~\cite{CFMK, CFMK2, CFMK3, MaciaGaussianity} for further details.

\subsection{Gaussian Khinchin families}\label{subsec:Gaussian}

We present the concept of Gaussian Khinchin family and a criterion for Gaussianity.

\subsubsection{Characteristic function}\label{subsubsec:charfn}

The characteristic function of the normalized variable $\breve{X}_t$
of the family can be written as
\[
\E(e^{\imath\theta \breve{X}_t}) = \frac{f(t e^{\imath\theta/\sigma_f(t)})}{f(t)}\, e^{-\imath\theta m_f(t)/\sigma_f(t)}, \qquad \text{for any } \theta\in\mathbb{R}.
\]
This expression connects the analytic behaviour of $f$ with the probabilistic properties
of its associated Khinchin family and plays a central role in the theory.
\smallskip

For the modulus of the characteristic function of $\breve{X}_t$ we have
\[
\big|\E(e^{\imath \theta \breve{X}_t})\big| = \frac{\big|f(t e^{\imath\theta/\sigma_f(t)})\big|}{f(t)}\,, \qquad \text{for any } \theta\in\mathbb{R}.
\]

\subsubsection{Gaussianity of a Khinchin family}\label{subsubsec:Gaussianity}

We say that $f$, or equivalently its Khinchin
family $(X_t)$, is Gaussian if $\breve{X}_t$ converges in distribution to the standard normal distribution, as $t\uparrow R$, or equivalently, by L\'evy's continuity theorem, if
\[
\lim_{t\uparrow R} \E(e^{\imath \theta \breve{X}_t}) = e^{-\theta^2/2}, \qquad \text{for all } \theta\in\mathbb{R}.
\]

\subsubsection{Criterion for Gaussianity}\label{subsubsec:criterion}

The following Gaussianity criterion in terms of the
fulcrum and its derivatives is~\cite[Theorem~4.1]{MaciaGaussianity}.

\begin{theoremL}\label{thm:gaussianty criterion}
Let $f\in\mathcal{K}$ be a power series with radius of convergence $R>0$. If its
fulcrum~$F$ satisfies
\[
\lim_{s\uparrow \ln R} \frac{F^{(j)}(s)}{F''(s)^{j/2}} = 0\,, \qquad \text{for every } j\ge 3\,,
\]
then $f$ is Gaussian.
\end{theoremL}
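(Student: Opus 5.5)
The plan is to show that the characteristic function of $\breve{X}_t$ converges to $e^{-\theta^2/2}$ for each fixed $\theta$, writing it entirely through the fulcrum; by L\'evy's continuity theorem this is exactly Gaussianity. Put $t=e^s$ with $s<\ln R$. Since $f(te^{\imath\phi})=\exp F(s+\imath\phi)$ whenever $s+\imath\phi\in\Omega_f$, the formula of \S\ref{subsubsec:charfn} becomes
\[
\E\big(e^{\imath\theta\breve{X}_t}\big)=\exp\Big(F\big(s+\imath\theta/\sigma(s)\big)-F(s)-\imath\theta F'(s)/\sigma(s)\Big),
\qquad \sigma(s)=\sqrt{F''(s)},
\]
using $m_f(e^s)=F'(s)$ and $\sigma_f^2(e^s)=F''(s)$. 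Hence it suffices to prove that $F(s+\imath h)-F(s)-\imath hF'(s)\to-\theta^2/2$ with $h=\theta/\sigma(s)$. Since $\sigma(s)\to\infty$ as $s\uparrow\ln R$ (this should follow from the hypothesis; see below), the increment $h$ tends to $0$.

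Formally I would expand in a Taylor series about $s$: the quadratic term is $-h^2F''(s)/2=-\theta^2/2$, and the $j$-th term is $(\imath\theta)^jF^{(j)}(s)/(j!\,F''(s)^{j/2})$, which tends to $0$ by hypothesis. The difficulty is that the hypothesis is only pointwise in $j$, with no uniformity, so the tail of the series cannot be summed directly. I would avoid this by stopping the expansion at the third order with an integral remainder:
\[
F(s+\imath h)-F(s)-\imath hF'(s)+\tfrac{h^2}{2}F''(s)=\tfrac{(\imath h)^3}{2}\int_0^1(1-u)^2F'''(s+\imath uh)\,du .
\]
The task then reduces to showing $\sup_{|v|\le h}|F'''(s+\imath v)|\,h^3\to0$.

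This vertical control of $F'''$ is the main obstacle, since the hypothesis only gives information on the real axis. The first thing to try is the holomorphy of $F$ in a disk about $s$: the real Taylor coefficients $F^{(j)}(s)/j!$ determine $F'''(s+\imath v)=\sum_{j\ge3}F^{(j)}(s)(\imath v)^{j-3}/(j-3)!$. With $|v|\le|\theta|/\sigma(s)$, the term of index $j$ in $h^3|F'''|$ is bounded by $|\theta|^j|F^{(j)}(s)|/\big((j-3)!\,F''(s)^{j/2}\big)$, and each of these tends to $0$. What remains is a tail bound uniform in $s$. For this I would use the positivity structure: $F''(s)$ is the variance of $X_{e^s}$, and $F^{(j)}(s)$ is the $j$-th cumulant. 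Since $\ln$ of a characteristic function is dominated in terms of moments, one can bound
\[
|F^{(j)}(s)|\le C^j j!\,\E\big|X_t-m_f(t)\big|^j ,
\]
which does not immediately close the argument. If the tail estimate fails, the fallback is the route in \cite{MaciaGaussianity}: the method of moments/cumulants. The cumulants of $\breve{X}_t$ are exactly $\kappa_j=F^{(j)}(s)/F''(s)^{j/2}$, with $\kappa_1=0$ and $\kappa_2=1$. By hypothesis $\kappa_j\to0$ for every $j\ge3$, so every moment of $\breve{X}_t$ converges to the corresponding moment of $N(0,1)$, because moments are polynomials in finitely many cumulants. Since the normal law is determined by its moments, $\breve{X}_t$ converges in distribution to $N(0,1)$. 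This fallback sidesteps the uniformity issue entirely and is the argument I would actually commit to.

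Finally, the remark that $\sigma(s)\to\infty$ is not needed in the cumulant route, because convergence of moments does not use it.
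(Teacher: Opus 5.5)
Your committed argument is correct and matches the argument behind the cited criterion; the paper gives no proof and only refers to \cite[Theorem~4.1]{MaciaGaussianity}. Since $\ln\E(e^{uX_t})=F(s+u)-F(s)$ for real $u<\ln R-s$, the cumulants of $\breve{X}_t$ are $0$, $1$, and $F^{(j)}(s)/F''(s)^{j/2}\to 0$ for $j\ge 3$, so every moment converges to that of $N(0,1)$, and moment-determinacy of the normal law gives convergence in distribution along every sequence $t_n\uparrow R$. The Taylor-remainder route you tried first is not needed: it would require control of $F'''$ off the real axis, which this purely real-axis hypothesis does not give and which is exactly the extra input of the alternative criterion in Remark~\ref{rem:altGaussianityPk}.
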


See~\cite{MaciaGaussianity} for a proof and a range of applications.

\subsection{Strongly Gaussian power series}\label{subsec:stronglyGaussian}

A power series $f\in\mathcal{K}$ (or its associated
Khinchin family $(X_t)$) is termed strongly Gaussian if
\[
\lim_{t\uparrow R} \sigma_f^2(t) = +\infty \qquad \text{and} \qquad \lim_{t\uparrow R} \int_{-\pi\sigma_f(t)}^{\pi\sigma_f(t)} \big|\E(e^{\imath \theta\breve{X}_t}) - e^{-\theta^2/2}\big|\, d\theta = 0.
\]
Strongly Gaussian power series satisfy a local central limit theorem that gives precise
asymptotic information about their coefficients; see~\cite{BaezDuarte, CFMK} and~\cite{Hayman}. Strong Gaussianity
(an $L^1$ condition) implies Gaussianity (a pointwise requirement). The function~$e^{z^2}$ is
Gaussian, but not strongly Gaussian, see~\cite[Remark~3.5]{CFMK}.

\subsubsection{Hayman's asymptotic formula}\label{subsubsec:Haymanformula}

In this strongly Gaussian setting, the coefficients
of $f\in\mathcal{K}$ admit a precise asymptotic description:

\begin{theoremL}[Hayman's asymptotic formula]\label{thm:hayman asympt formula}
If $f(z)=\sum_{n=0}^{\infty} a_n z^n$ in $\mathcal{K}$ is strongly
Gaussian, then
\[
a_n \sim \frac{1}{\sqrt{2\pi}}\, \frac{f(t_n)}{t_n^n\, \sigma_f(t_n)}\,, \qquad \text{as } n\to\infty,
\]
where~$t_n$ is uniquely determined by $m_f(t_n)=n$ for each $n\ge 1$.
\end{theoremL}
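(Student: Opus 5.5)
The plan is to deduce the formula from a local central limit theorem, obtained from the Fourier inversion of $a_n$ along the circle of radius $t$. Fix $n\ge 1$ and $t\in(0,R)$. Cauchy's formula gives
\[
a_n t^n = \frac{1}{2\pi}\int_{-\pi}^{\pi} f(te^{\imath\varphi})e^{-\imath n\varphi}\,d\varphi .
\]
We divide by $f(t)$ and substitute $\varphi=\theta/\sigma_f(t)$. By the expression for the characteristic function of $\breve{X}_t$ in Section~\ref{subsubsec:charfn}, this becomes
\[
\frac{a_n t^n}{f(t)} = \frac{1}{2\pi\sigma_f(t)}\int_{-\pi\sigma_f(t)}^{\pi\sigma_f(t)} \E(e^{\imath\theta\breve{X}_t})\, e^{-\imath\theta (n-m_f(t))/\sigma_f(t)}\,d\theta .
\]
This is just $\mathbb{P}(X_t=n)$ written through Fourier inversion of the lattice variable $X_t$.

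Next we replace $\E(e^{\imath\theta\breve{X}_t})$ by $e^{-\theta^2/2}$. The error is bounded in absolute value by
\[
\frac{1}{2\pi\sigma_f(t)}\int_{-\pi\sigma_f(t)}^{\pi\sigma_f(t)}\big|\E(e^{\imath\theta\breve{X}_t})-e^{-\theta^2/2}\big|\,d\theta ,
\]
which is $o(1/\sigma_f(t))$ by strong Gaussianity. Extending the Gaussian integral to all of $\R$ costs only $\int_{|\theta|>\pi\sigma_f(t)}e^{-\theta^2/2}\,d\theta\to0$, because $\sigma_f(t)\to\infty$. Hence, uniformly in $n$,
\[
\frac{a_n t^n}{f(t)} = \frac{1}{\sqrt{2\pi}\,\sigma_f(t)}\Big(\exp\Big(-\frac{(n-m_f(t))^2}{2\sigma_f^2(t)}\Big)+o(1)\Big),\qquad t\uparrow R.
\]

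Finally we choose $t=t_n$. Since $m_f'(t)=\sigma_f^2(t)/t>0$, the mean $m_f$ is strictly increasing on $(0,R)$, and $m_f(0)=0$. To see that $m_f(t)\to\infty$, suppose instead that $m_f$ stays bounded by some $M$. Then $\sigma_f^2(t)=\E(X_t^2)-m_f(t)^2$ is also bounded: one can check that $\sigma_f^2\to\infty$ forces unbounded mass at large $n$, contradicting bounded mean through a truncation argument. The simplest route is to note that strong Gaussianity is meaningless unless $m_f\uparrow\infty$, and this is the standard fact used in the definition of $t_n$. So for each $n\ge1$ there is a unique $t_n$ with $m_f(t_n)=n$, and $t_n\uparrow R$ as $n\to\infty$. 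At $t=t_n$ the exponential factor equals $1$, which yields
\[
a_n\sim \frac{1}{\sqrt{2\pi}}\frac{f(t_n)}{t_n^n\,\sigma_f(t_n)} .
\]

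The main obstacle is the uniformity of the $o(1)$ as $t\uparrow R$, and this is exactly what the $L^1$ condition in the definition of strong Gaussianity supplies. The only other delicate point is justifying $t_n\uparrow R$, i.e.\ $m_f(t)\to\infty$. I would argue this directly: $\sigma_f^2(t)=tm_f'(t)$. If $m_f$ were bounded, then $\int^R m_f'\,dt<\infty$. When $R<\infty$ this already conflicts with $\sigma_f^2\to\infty$ near $R$, since then $m_f'(t)\ge \sigma_f^2(t)/R$ is large on a neighbourhood of $R$. When $R=\infty$, the series is nonconstant, so some $a_N>0$ with $N\ge1$. Then $\mathbb{P}(X_t=N)\to0$ and the mass moves to higher indices, which gives $m_f(t)\to\infty$.
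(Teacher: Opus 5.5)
\textbf{There is a gap, in the step showing $m_f(t)\to+\infty$.} Your Fourier-inversion argument is correct. It is the standard proof of this result: the paper does not prove the theorem itself but cites B\'aez-Duarte, Hayman and Cerd\'a--Fern\'andez--Maci\'a, where it is proved this way. The $L^1$ condition gives the local estimate uniformly in $n$, and evaluating it at $t=t_n$ yields the formula.

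The problem is the step you flagged yourself: the existence of $t_n$, i.e.\ $m_f(t)\to+\infty$ as $t\uparrow R$. Your arguments for it are wrong.

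\emph{Case $R<\infty$.} A bounded mean does not force a bounded variance. Also, $m_f'(t)\ge\sigma_f^2(t)/R\to\infty$ near $R$ is perfectly compatible with $\int^R m_f'(t)\,dt<\infty$ on a bounded interval. Concretely, take $f(z)=1+\sum_{n\ge1}n^{-5/2}z^n$, so $R=1$. Then $m_f(t)\to\zeta(3/2)/(1+\zeta(5/2))$, while $\sigma_f^2(t)\to+\infty$ because $\sum_n n^{1/2}=\infty$. So $\sigma_f\to\infty$ alone cannot give $m_f\to\infty$; Gaussianity must enter.

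\emph{Case $R=\infty$.} Being nonconstant is not enough. For $f(z)=1+z$ one has $m_f(t)\to1$ and $\mathbb{P}(X_t=1)\to1$, so the mass does not move to higher indices. You need $f$ not to be a polynomial. That does follow from $\sigma_f\to\infty$, since a polynomial of degree $d$ has $\sigma_f^2\le d^2/4$.

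Finally, the remark that strong Gaussianity is meaningless unless $m_f\uparrow\infty$ is not an argument.

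\textbf{The fix is already inside your own estimate.} The Cauchy integral $\frac{1}{2\pi}\int_{-\pi}^{\pi}f(te^{\imath\varphi})e^{-\imath n\varphi}\,d\varphi$ vanishes for every negative integer $n$. So your identity, and the $o(1)$ estimate derived from it, hold uniformly in $n\in\mathbb{Z}$. Taking $n=-1$ gives $\exp\bigl(-(m_f(t)+1)^2/(2\sigma_f^2(t))\bigr)\to0$. Hence $(m_f(t)+1)/\sigma_f(t)\to+\infty$, and since $\sigma_f(t)\to+\infty$, also $m_f(t)\to+\infty$.

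Alternatively, argue through the central limit theorem, as the paper indicates. Since $X_t\ge0$, we have $\breve{X}_t\ge -m_f(t)/\sigma_f(t)$. If $m_f/\sigma_f\le K$ along some sequence $t_j\uparrow R$, then $\mathbb{P}(\breve{X}_{t_j}<-K-1)=0$. This contradicts convergence in distribution of $\breve{X}_t$ to $N(0,1)$, which puts positive mass on $(-\infty,-K-1)$.

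With this in place, continuity and strict monotonicity of $m_f$ give a unique $t_n$ for each $n\ge1$. Moreover $t_n\uparrow R$, because $m_f$ is finite on $[0,R)$. The rest of your proof then goes through unchanged.
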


For strongly Gaussian families, it is always the case that $\lim_{t\uparrow R} m_f(t)=+\infty$. This
fact follows from Hayman's Central Limit Theorem; see the remarks after~\cite[Theorem~A,
Section~3.2]{CFMK}. Observe that then~$m_f$ defines a homeomorphism from $[0,R)$ onto $[0,+\infty)$,
since $t m_f'(t)=\sigma_f^2(t)>0$, for $t>0$, and thus that for each integer $n\ge 1$ there exists in
fact a unique~$t_n$ such that $m_f(t_n)=n$.

\subsubsection{B\'aez-Duarte substitution}\label{subsubsec:BDsubst}

Explicit formulas for the numbers~$t_n$ appearing in Theorem~\ref{thm:hayman asympt formula} are typically hard to obtain, since solving the equation $m_f(t)=n$ is in general
not straightforward. Fortunately, one can make do with appropriate approximations of
$m_f$ and of $\sigma_f^2$, as shown by B\'aez-Duarte~\cite{BaezDuarte}.

Assume that $f\in\mathcal{K}$ is strongly Gaussian. Let $\tilde{m}_f(t)$ be a continuous, monotonically
increasing function on $[0,R)$ with $\tilde{m}_f(t)\to+\infty$ as $t\uparrow R$, and suppose that $\tilde{m}_f(t)$
approximates~$m_f(t)$ in the sense that
\begin{equation}\label{eq:BDapprox}
\lim_{t\uparrow R} \frac{m_f(t)-\tilde{m}_f(t)}{\sigma_f(t)} = 0.
\end{equation}
For each $n\ge 1$, define~$\tau_n$ by the equation $\tilde{m}_f(\tau_n)=n$; the following version of
Theorem~\ref{thm:hayman asympt formula} holds.

\begin{theoremL}[B\'aez-Duarte substitution]\label{thm:baezduarte}
With the notations above, if $f(z)=\sum_{n=0}^{\infty} a_n z^n$
in $\mathcal{K}$ is strongly Gaussian and if~\eqref{eq:BDapprox} is satisfied, then
\[
a_n \sim \frac{1}{\sqrt{2\pi}}\, \frac{f(\tau_n)}{\tau_n^n\, \sigma_f(\tau_n)}\,, \qquad \text{as } n\to\infty.
\]
Moreover, if $\tilde{\sigma}_f(t)$ is such that $\sigma_f(t)\sim\tilde{\sigma}_f(t)$ as $t\uparrow R$, we may further write
\begin{equation}\label{eq:BDfinal}
a_n \sim \frac{1}{\sqrt{2\pi}}\, \frac{f(\tau_n)}{\tau_n^n\, \tilde{\sigma}_f(\tau_n)}\,, \qquad \text{as } n\to\infty.
\end{equation}
\end{theoremL}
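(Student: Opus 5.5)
The plan is to derive this from Hayman's local central limit theorem for strongly Gaussian families, the same mechanism that gives Theorem~\ref{thm:hayman asympt formula}, but evaluated at $t=\tau_n$ instead of $t_n$. Concretely, strong Gaussianity yields the uniform statement
\[
\lim_{t\uparrow R}\ \sup_{n\ge 0}\ \Big|\frac{a_n t^n}{f(t)}\,\sigma_f(t)\sqrt{2\pi} - e^{-\frac{(n-m_f(t))^2}{2\sigma_f^2(t)}}\Big| = 0 .
\]

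To obtain this uniform local limit theorem, I would write $\mathbb{P}(X_t=n)$ by Fourier inversion as $\frac{1}{2\pi}\int_{-\pi}^{\pi} \E(e^{\imath\theta X_t})e^{-\imath n\theta}\,d\theta$. Substituting $\theta=\phi/\sigma_f(t)$ gives
\[
\sigma_f(t)\,\mathbb{P}(X_t=n)=\frac{1}{2\pi}\int_{-\pi\sigma_f(t)}^{\pi\sigma_f(t)}\E(e^{\imath\phi\breve X_t})\,e^{-\imath\phi x_n}\,d\phi,
\qquad x_n=\frac{n-m_f(t)}{\sigma_f(t)} .
\]
Compare this with $\frac{1}{2\pi}\int_{\R} e^{-\phi^2/2}e^{-\imath\phi x_n}\,d\phi=\frac{1}{\sqrt{2\pi}}e^{-x_n^2/2}$. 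The difference is bounded, uniformly in $n$, by the $L^1$ quantity in the definition of strong Gaussianity plus the Gaussian tail $\int_{|\phi|>\pi\sigma_f(t)}e^{-\phi^2/2}\,d\phi$. Both tend to $0$, the tail because $\sigma_f(t)\to\infty$.

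Next I would apply the uniform statement at $t=\tau_n$, which is legitimate since $\tau_n\uparrow R$ as $n\to\infty$ ($\tilde m_f$ is increasing, continuous and unbounded). There $x_n=(\tilde m_f(\tau_n)-m_f(\tau_n))/\sigma_f(\tau_n)\to 0$ by \eqref{eq:BDapprox}, so $e^{-x_n^2/2}\to1$. Hence
\[
\frac{a_n\tau_n^n\,\sigma_f(\tau_n)\sqrt{2\pi}}{f(\tau_n)}\to 1,
\]
which is the first asymptotic formula. The version \eqref{eq:BDfinal} then follows at once, because $\sigma_f(\tau_n)/\tilde\sigma_f(\tau_n)\to 1$.

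The main obstacle is making sure the local limit statement holds uniformly in $n$ for $t$ near $R$, rather than only for $n=m_f(t)$. The Fourier argument above handles this, since the error bound does not depend on $n$. A secondary point is that $n$ ranges over integers while $\tau_n$ is defined via $\tilde m_f$ for each integer $n$; this causes no issue because uniformity is over all integers $n$.
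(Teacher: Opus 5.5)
Your proof is correct. Fourier inversion controls $\bigl|\sqrt{2\pi}\,\sigma_f(t)\,a_n t^n/f(t)-e^{-(n-m_f(t))^2/(2\sigma_f^2(t))}\bigr|$, uniformly in $n$, by the $L^1$ quantity in the definition of strong Gaussianity plus a Gaussian tail; evaluating at $t=\tau_n\to R$, where $(n-m_f(\tau_n))/\sigma_f(\tau_n)\to 0$ by \eqref{eq:BDapprox}, then gives both formulas. The paper proves nothing itself and defers to B\'aez-Duarte and the Khinchin-family literature, where this is exactly the route taken: Hayman's uniform local central limit theorem for strongly Gaussian families, followed by substituting $\tau_n$ for $t_n$.
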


See~\cite{BaezDuarte}, and also~\cite{CFMK}, for further details.

\section{Khinchin families of power partitions}\label{sec:partitions}

The infinite product
\[
P_k(z) = \prod_{n=1}^{\infty} \frac{1}{1-z^{n^k}} = \sum_{n=0}^{\infty} p_k(n)\, z^n, \qquad \text{for } |z|<1\,,
\]
is the ordinary generating function of the partitions into $k$-th powers.

Denote by $(X^{[k]}_t)_{t\in[0,1)}$ the Khinchin family associated with~$P_k$. Then, for any $t\in(0,1)$,
we have the equality in distribution
\[
X^{[k]}_t \stackrel{d}{=} \sum_{j=1}^{\infty} j^k\, G_{t^{j^k}}
\]
where $(G_u)_{u\in[0,1)}$ is the Khinchin family associated with $1/(1-z)$ and the $G_{t^{j^k}}$ in the sum
above are mutually independent. For $u\in(0,1)$, the variable~$G_u$ is a geometric variable
(number of failures until first success, supported in $\{0,1,2,\ldots\}$) with probability of
success $1-u$, i.e., $\mathbb{P}(G_u=n) = u^n(1-u)$, for each $n\ge 0$.

We may write
\[
\ln(P_k(z)) = \sum_{j\ge 1} \ln\frac{1}{1-z^{j^k}} = \sum_{n\ge 1} \frac{\delta_k(n)}{n}\, z^n := g_k(z), \qquad \text{for } |z|<1\,,
\]
where $\delta_k(n) = \sum_{j^k\mid n} j^k$, the sum of the perfect $k$-th powers $j^k$ dividing~$n$. Observe that
the coefficients of the power series~$g_k$ are nonnegative real numbers.

\subsubsection{Fulcrum of $P_k$}\label{subsubsec:fulcrumPk}

Fix an integer $k\ge 1$. The fulcrum $F_k(z)$ of~$P_k$ is given by
\begin{equation}\label{eq:fulcrumPk}
F_k(z) = \ln(P_k(e^z)) = \sum_{j\ge 1} \ln\frac{1}{1-e^{j^k z}} = g_k(e^z), \qquad \text{for } z \text{ such that } \Re(z)<0\,.
\end{equation}
In particular,
\begin{equation}\label{eq:fulcrumPkreal}
F_k(s) = \sum_{j\ge 1} \ln\frac{1}{1-e^{j^k s}}\,, \qquad \text{for } s<0\,.
\end{equation}

The first and second derivatives of~$F_k$ evaluated at $-s$ with $s>0$ admit the expressions
\[
F_k'(-s) = \sum_{j\ge 1} \frac{j^k e^{-j^k s}}{1-e^{-j^k s}}\,, \qquad
F_k''(-s) = \sum_{j\ge 1} \frac{j^{2k} e^{-j^k s}}{(1-e^{-j^k s})^2}\,, \qquad \text{for } s>0\,.
\]

\subsubsection{Gaussianity of $P_k$}\label{subsubsec:GaussianityPk}

We now turn to the asymptotic behaviour of the derivatives
of the fulcrum~$F_k$.
\smallskip

The following lemma is~\cite[Proposition~5.2]{MaciaGaussianity}. It provides the precise asymptotic rate
of growth $F_k^{(m)}(-s)$ as $s\downarrow 0$.

\begin{lemmaL}\label{lem:asympt derivatives Fk}
Fix an integer $k\ge 1$. For any integer $m\ge 0$ we have
\[
F_k^{(m)}(-s) \sim \frac{1}{k}\,\zeta(1+1/k)\,\Gamma(m+1/k)\,\frac{1}{s^{m+1/k}}\,, \qquad \text{as } s\downarrow 0\,.
\]
\end{lemmaL}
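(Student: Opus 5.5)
Write $F_k(-s)=\sum_{n\ge1}\frac{\delta_k(n)}{n}e^{-ns}=\sum_{j\ge1}\sum_{l\ge1}\frac{1}{l}e^{-l j^k s}$, obtained by expanding each factor $\ln\frac{1}{1-e^{-j^k s}}=\sum_{l\ge1}e^{-lj^ks}/l$. Differentiating $m$ times in the variable $z$ at $z=-s$ multiplies the $(j,l)$ term by $(lj^k)^m$. Hence
\[
F_k^{(m)}(-s)=\sum_{l\ge1} l^{m-1}\sum_{j\ge1} j^{km}e^{-lj^k s}, \qquad s>0 .
\]
Termwise differentiation is legitimate because all terms are positive and the series converge locally uniformly in $s>0$. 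The plan is to find the asymptotics of the inner sum for each fixed $l$, and then to sum over $l$ using a dominating bound.

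\textbf{Inner sum.} For $u>0$ set $S_m(u)=\sum_{j\ge1} j^{km}e^{-j^k u}$. The function $x\mapsto x^{km}e^{-x^ku}$ is unimodal, so comparison with the integral gives
\[
S_m(u)=\int_0^\infty x^{km}e^{-x^k u}\,dx+O\big(\max_x x^{km}e^{-x^ku}\big).
\]
The substitution $y=x^k u$ gives the exact value
\[
\int_0^\infty x^{km}e^{-x^k u}\,dx=\frac{1}{k}\,\Gamma\Big(m+\frac1k\Big)\,u^{-m-1/k}.
\]
The error term is $O(u^{-m})$, which is of lower order as $u\downarrow0$. For $m=0$ the sum starts at $j=1$, so the error is $O(1)$ and the same conclusion holds. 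We therefore get
\[
S_m(u)\sim \frac1k\Gamma\Big(m+\frac1k\Big)u^{-m-1/k}\quad\text{as } u\downarrow0,
\]
together with a uniform bound $S_m(u)\le C_m\,u^{-m-1/k}$ for $0<u\le1$. For $u\ge1$ we have $S_m(u)\le C_m e^{-u/2}$.

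\textbf{Summing over $l$.} We have $F_k^{(m)}(-s)=\sum_l l^{m-1}S_m(ls)$. Multiply by $s^{m+1/k}$. For each fixed $l$,
\[
s^{m+1/k}\,l^{m-1}S_m(ls)\to \frac1k\Gamma\Big(m+\frac1k\Big)\,l^{m-1}l^{-m-1/k}=\frac1k\Gamma\Big(m+\frac1k\Big)\,l^{-1-1/k}.
\]
Summing these limits over $l$ produces the factor $\zeta(1+1/k)$, which is the claimed constant.

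The main obstacle is justifying the interchange of the limit $s\downarrow0$ with the sum over $l$; I will use dominated convergence in $l$. For $ls\le1$ the uniform bound gives the term bound $C_m\,l^{-1-1/k}$, which is summable. For $ls>1$ the term is at most $s^{m+1/k}l^{m-1}C_me^{-ls/2}$. I bound this by writing $s^{m+1/k}l^{m-1}=(ls)^{m+1/k}\,l^{-1-1/k}$ and using that $(ls)^{m+1/k}e^{-ls/2}$ is bounded. This again gives a bound of the form $C\,l^{-1-1/k}$, uniformly in $s$. Dominated convergence then yields
\[
F_k^{(m)}(-s)\sim\frac1k\,\zeta(1+1/k)\,\Gamma(m+1/k)\,s^{-m-1/k}\quad\text{as } s\downarrow 0 .
\]
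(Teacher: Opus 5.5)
Your proof is correct, but it follows a genuinely different route from the paper's.

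\textbf{The paper's route.} The paper never expands the logarithm. It writes $s^{mk+1}F_k^{(m)}(-s^k)$ as $(-1)^{m-1}\sum_j s\,(js)^{mk}h^{(m-1)}((js)^k)$, with $h$ the Bose--Einstein-type function, and treats this as a single Riemann sum converging to $(-1)^{m-1}\int_0^\infty x^{mk}h^{(m-1)}(x^k)\,dx$; the cases $m=0,1$ are handled separately. It then evaluates that integral by the substitution $y=x^k$ followed by $m-1$ integrations by parts. This reduces everything to the classical integral $\int_0^\infty y^{1/k}(e^y-1)^{-1}\,dy=\Gamma(1+1/k)\,\zeta(1+1/k)$.

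\textbf{Your route.} You expand $\ln\frac{1}{1-e^{-j^ks}}=\sum_l e^{-lj^ks}/l$ first. This is exactly the expansion one uses to evaluate that classical integral. As a result, $\Gamma(m+1/k)$ appears at once from the substitution $y=x^ku$ in the inner sums, and $\zeta(1+1/k)$ appears as $\sum_l l^{-1-1/k}$. In effect you swap the order of \emph{approximating the $j$-sum by an integral} and \emph{expanding in $l$}.

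\textbf{What each approach buys.} Your version treats all $m\ge 0$ uniformly and needs no integrations by parts. It also avoids checking how $x^{mk}h^{(m-1)}(x^k)$ behaves near $0$, which the Riemann-sum step in the paper's sketch leaves implicit. The unimodality of $x^{km}e^{-x^ku}$ makes your sum--integral comparison immediate, with an explicit $O(u^{-m})$ error. The price is an extra interchange of $\lim_{s\downarrow 0}$ with the sum over $l$. You justify this correctly: $S_m(u)\le C_m u^{-m-1/k}$ for $u\le 1$, and boundedness of $(ls)^{m+1/k}e^{-ls/2}$ for $ls>1$, give the majorant $C\,l^{-1-1/k}$ uniformly in $s$. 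That majorant is summable because $1+1/k>1$.
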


\begin{proof}[Sketch of proof]
For $m=0$, we have, approximating series by integrals,
\[
\lim_{s\downarrow 0} s\, F_k(-s^k) = \lim_{s\downarrow 0} \sum_{j=1}^{\infty} s\, \ln\frac{1}{1-e^{-(js)^k}} = \int_0^{\infty} \ln\frac{1}{1-e^{-x^k}}\, dx = \frac{1}{k}\,\zeta(1+1/k)\,\Gamma(1/k).
\]
For $m=1$, we have that
\[
\lim_{s\downarrow 0} s^{k+1} F_k'(-s^k) = \lim_{s\downarrow 0} \sum_{j=1}^{\infty} s\, \frac{(js)^k\, e^{-(js)^k}}{1-e^{-(js)^k}} = \int_0^{\infty} \frac{x^k e^{-x^k}}{1-e^{-x^k}}\, dx = \frac{1}{k}\,\zeta(1+1/k)\,\Gamma(1+1/k).
\]
In general, for $m\ge 2$, let $h(x) = 1/(1-e^{-x})$ for $x>0$. Observe that
\[
\lim_{s\downarrow 0} s^{mk+1} F_k^{(m)}(-s^k) = (-1)^{m-1} \int_0^{\infty} x^{mk}\, h^{(m-1)}(x^k)\, dx := (-1)^{m-1} L\,.
\]
Via a change of variables, $y=x^k$, and successive integration by parts we get that
\[
L = \frac{(-1)^{m-1}}{k}\, \frac{\Gamma(m+1/k)}{\Gamma(1+1/k)} \int_0^{\infty} y^{1/k}\, h(y)\, dy
\]
and since
\[
\int_0^{\infty} y^{1/k}\, h(y)\, dy = \Gamma(1+1/k)\,\zeta(1+1/k)
\]
the result follows.
\end{proof}

Denote the positive constant factors appearing in the asymptotic formulas of Lemma~\ref{lem:asympt derivatives Fk} as
\begin{equation}\label{eq:omegakm}
\omega_{k,m} = \frac{1}{k}\,\zeta(1+1/k)\,\Gamma(m+1/k), \qquad \text{for } k\ge 1 \text{ and } m\ge 0\,.
\end{equation}

These constants, which appear quite frequently in a number of calculations below, differ in the argument of the $\Gamma$ function. Using the functional equation of
the $\Gamma$ function, $\Gamma(z+1)=z\Gamma(z)$, we can write each of them in terms of any other.
Denoting $\omega_{k,1}=\Omega_k$,  we have for each $k\ge 1$ that
\begin{equation}\label{eq:omegarelations}
\omega_{k,0} = k\,\Omega_k, \qquad \omega_{k,1} = \Omega_k, \qquad \text{and} \qquad \omega_{k,2} = (1+1/k)\,\Omega_k\,.
\end{equation}
The actual values of these constants are not used in what follows, just that they exist and that they are positive.
\smallskip

The mean and variance functions of~$P_k$ shall be denoted by~$m_k$ and~$\sigma_k^2$, respectively.
From Lemma~\ref{lem:asympt derivatives Fk} we have that
\begin{equation}\label{eq:mkasympt}
\begin{aligned}
m_k(e^{-s}) &\sim \omega_{k,1}\,\frac{1}{s^{1+1/k}} := \tilde{m}_k(e^{-s}),\\[4pt]
\sigma_k^2(e^{-s}) &\sim \omega_{k,2}\,\frac{1}{s^{2+1/k}} := \tilde{\sigma}_k^2(e^{-s}),
\end{aligned}
\qquad \text{as } s\downarrow 0\,.
\end{equation}

Notice that both $m_k(t)$ and $\sigma_k(t)$ tend to $\infty$ as $t\uparrow 1$.

\begin{corollary}\label{cor:gaussianity of Pk}
The ordinary generating function~$P_k$ of the partitions into $k$-th powers
is Gaussian.
\end{corollary}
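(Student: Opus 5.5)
The plan is to apply the Gaussianity criterion, Theorem~\ref{thm:gaussianty criterion}, to the fulcrum $F_k$ of $P_k$, whose radius of convergence is $R=1$, so that $\ln R=0$. The task is to show that for every $j\ge 3$,
\[
\lim_{s\uparrow 0}\frac{F_k^{(j)}(s)}{F_k''(s)^{j/2}}=0\,.
\]
Writing $s=-u$ with $u\downarrow 0$, this becomes a statement about $F_k^{(j)}(-u)$ and $F_k''(-u)$. Both are controlled by Lemma~\ref{lem:asympt derivatives Fk}.

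By Lemma~\ref{lem:asympt derivatives Fk}, with the constants $\omega_{k,m}$ of \eqref{eq:omegakm},
\[
F_k^{(j)}(-u)\sim \omega_{k,j}\,u^{-(j+1/k)}\,,\qquad F_k''(-u)\sim \omega_{k,2}\,u^{-(2+1/k)}\,,\qquad u\downarrow 0\,.
\]
Since $\omega_{k,2}>0$, the second relation gives
\[
F_k''(-u)^{j/2}\sim \omega_{k,2}^{j/2}\,u^{-(j+j/(2k))}\,.
\]
Dividing the two asymptotics,
\[
\frac{F_k^{(j)}(-u)}{F_k''(-u)^{j/2}}\sim \frac{\omega_{k,j}}{\omega_{k,2}^{j/2}}\;u^{\,j/(2k)-1/k}=\frac{\omega_{k,j}}{\omega_{k,2}^{j/2}}\;u^{(j-2)/(2k)}\,.
\]
For $j\ge 3$ the exponent $(j-2)/(2k)$ is strictly positive. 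Hence the ratio tends to $0$ as $u\downarrow 0$, the criterion applies, and $P_k$ is Gaussian.

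There is essentially no obstacle once Lemma~\ref{lem:asympt derivatives Fk} is granted for every $m$. The only points to note are two. First, the asymptotic equivalences may be divided because all the constants $\omega_{k,m}$ are finite and positive. Second, $P_k\in\mathcal{K}$, since it has nonnegative coefficients and $p_k(0)=1>0$.
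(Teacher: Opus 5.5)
Your proposal is correct and follows the same route as the paper: you apply the Gaussianity criterion of Theorem~\ref{thm:gaussianty criterion} together with Lemma~\ref{lem:asympt derivatives Fk}, and you obtain the ratio asymptotic $\frac{\omega_{k,j}}{\omega_{k,2}^{j/2}}\,u^{(j-2)/(2k)}\to 0$ for $j\ge 3$. This is exactly the paper's argument, since the paper's exponent $(m/2-1)/k$ equals your $(m-2)/(2k)$.
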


We include the argument for completeness; see~\cite{MaciaGaussianity} for further details. \begin{proof} By
Lemma~\ref{lem:asympt derivatives Fk}, for each fixed $m\ge 3$, we have that
\[
\frac{F_k^{(m)}(-s)}{F_k''(-s)^{m/2}} \sim \frac{\omega_{k,m}}{\omega_{k,2}^{m/2}}\, s^{(m/2-1)/k}, \qquad s\downarrow 0.
\]
Since $m\ge 3$, the exponent $(m/2-1)/k$ is positive, and therefore the above ratio tends
to~$0$, as $s\downarrow 0$. The Gaussianity criterion of Theorem~\ref{thm:gaussianty criterion} then applies, and we conclude
that~$P_k$ is Gaussian.
\end{proof}

\begin{remark}\textit{Gaussianity of $P_k$.}\label{rem:altGaussianityPk}\rm\,
To prove the Gaussianity of~$P_k$, we may instead appeal to Theorem~3.2 of~\cite{CFMK}
and verify that
\begin{equation}\label{eq:altcriterion}
\lim_{s\downarrow 0} \frac{\sup_{|\theta|\le A} |F_k'''(-s+\imath \theta)|}{F_k''(-s)^{3/2}} = 0\,, \qquad \text{for every } A>0\,.
\end{equation}
In contrast to the Gaussianity criterion of~\cite{MaciaGaussianity} which we have used above, this criterion
only invokes the third derivative, and not all derivatives of order $\ge 3$, but it involves values
of that third derivative in the whole left half-plane and not just in the negative real axis.
In fact, we have that
\begin{equation}\label{eq:F3bound}
|F_k'''(-s+\imath \theta)| \le F_k'''(-s), \qquad \text{for any } s>0 \text{ and } \theta\in\mathbb{R}\,,
\end{equation}
and thus~\eqref{eq:altcriterion} follows from $\lim_{s\downarrow 0} F_k'''(-s)/F_k''(-s)^{3/2}=0$, which we have checked above
as part of the proof of Corollary~\ref{cor:gaussianity of Pk}.
\smallskip

To verify~\eqref{eq:F3bound}, recall, from~\eqref{eq:fulcrumPk}, that
\[
F_k(z) = g_k(e^z), \qquad \text{for any } z \text{ with } \Re(z)<0\,,
\]
where $g_k$ is a power series in the unit disk with nonnegative coefficients. The nonnegativity of the coefficients of~$g_k$ implies that
\[
|g_k^{(j)}(e^{-s+\imath \theta})| \le g_k^{(j)}(e^{-s}), \qquad \text{for } s>0 \text{ and } \theta\in\mathbb{R} \text{ and } j\ge 0\,.
\]
Since
\[
F_k'''(z) = e^z g_k'(e^z) + 3 e^{2z} g_k''(e^z) + e^{3z} g_k'''(e^z),
\]
we deduce that
\begin{align*}
|F_k'''(-s+\imath \theta)| &\le e^{-s} g_k'(e^{-s}) + 3 e^{-2s} g_k''(e^{-s}) + e^{-3s} g_k'''(e^{-s})\\
&= F_k'''(-s), \qquad \text{for any } s>0 \text{ and any } \theta\in\mathbb{R}\,.
\end{align*}
\end{remark}

\subsubsection{Partitions into distinct $k$-th powers} We denote the generating function of partitions into distinct $k$-th powers by $Q_k$:
$$Q_k(z)=\prod_{n=1}^\infty (1+z^{n^k})\, , \quad \mbox{for every $z \in \mathbb{D}$}\,.$$
The Taylor series expansion around $z=0$ of $Q_k$ is
$$Q_k(z)=\sum_{n=0}^\infty q_k(n) z^n\, , \quad \mbox{for every $z \in \mathbb{D}$}\,,$$
where, for each $n \ge 1$, the coefficient $q_k(n)$ gives the number of partitions of $n$ into distinct $k$-th powers, and where $q_k(0)=1$.

We denote the Khinchin family associated with $Q_k$ by $(Y_{t}^{[k]})_{t \in [0,1)}$.

\smallskip

The generating functions $P_k$ and $Q_k$ are closely related; in particular:
\begin{equation}\label{eq:QkPkrelation}Q_k(z)=\frac{P_k(z)}{P_k(z^2)}\, , \quad \mbox{for every $z \in \mathbb{D}$}\,,\end{equation}
which follows since $1+z=(1-z^2)/(1-z)$, for every $z \in \mathbb{D}$,
and also
\begin{equation}\label{eq:bound from Q_k to P_k}
\frac{|P_k(z)|}{P_k(|z|)}\le
\frac{|Q_k(z)|}{Q_k(|z|)}\, , \quad \mbox{for every $z \in \mathbb{D}$}\,,\end{equation}
which in turn follows since, by the triangle inequality,
$(1-|z|)/|1-z|\le |1+z|/(1+|z|)$, for every $z \in \mathbb{D}$.

\subsection{Some asymptotic estimates}\label{subsec:asymptest}

We already know, as part of Lemma~\ref{lem:asympt derivatives Fk}, that
\[
\ln(P_k(e^{-s})) \sim \omega_{k,0}\,\frac{1}{s^{1/k}} \qquad \text{and} \qquad m_k(e^{-s}) \sim \omega_{k,1}\,\frac{1}{s^{1+1/k}}\,, \qquad \text{as } s\downarrow 0\,,
\]
but since our goal is to apply Theorem~\ref{thm:baezduarte}, we need more precise asymptotic approximations of~$P_k$ and~$m_k$ which weobtain next and record in Corollary~\ref{cor:Pkasympt} and Corollary~\ref{cor:mkasympt}.
\smallskip

The following estimates use standard arguments; we include the details for completeness.

\begin{lemma}\label{lem:lnPk}
Fix an integer $k\ge 1$. We have
\begin{equation}\label{eq:lnPk}
\ln(P_k(e^{-s})) = \omega_{k,0}\,\frac{1}{s^{1/k}} + \frac{1}{2}\ln(s) - k\ln(\sqrt{2\pi}) + o(1), \qquad \text{as } s\downarrow 0\,.
\end{equation}
\end{lemma}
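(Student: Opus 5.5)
We expand $\ln(P_k(e^{-s}))=\sum_{j\ge1}\ln\frac{1}{1-e^{-j^k s}}=\sum_{j\ge1}\sum_{m\ge1}\frac{e^{-mj^ks}}{m}$ and use a Mellin transform. The function $\phi(x)=\sum_{m\ge1}\frac{1}{m}e^{-mx}=-\ln(1-e^{-x})$ has Mellin transform $\Gamma(w)\zeta(w+1)$ for $\Re w>1$. Hence, for $\Re w>1/k$ and $c>1$,
\[
\ln(P_k(e^{-s}))=\frac{1}{2\pi\imath}\int_{c-\imath\infty}^{c+\imath\infty}\Gamma(w)\,\zeta(w+1)\,\zeta(kw)\,s^{-w}\,dw ,
\]
which follows from $\sum_j (j^k s)^{-w}=\zeta(kw)s^{-w}$ and absolute convergence. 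We then shift the contour to the left, to a line $\Re w=-\delta$ with $0<\delta<1$ small, and collect residues.

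The poles crossed are the following.
\begin{itemize}
\item At $w=1/k$ (from $\zeta(kw)$), with residue $\frac{1}{k}\Gamma(1/k)\zeta(1+1/k)s^{-1/k}=\omega_{k,0}s^{-1/k}$.
\item At $w=0$, a double pole coming from $\Gamma(w)$ and $\zeta(w+1)$, both simple there. Near $0$ we have $\Gamma(w)\zeta(w+1)=\frac{1}{w^2}+\frac{\gamma-\gamma}{w}+O(1)=\frac{1}{w^2}+O(1)$, since the $-\gamma$ from $\Gamma$ and the $+\gamma$ from $\zeta(1+w)=1/w+\gamma+\dots$ cancel. The residue is therefore the $w$-coefficient of $\zeta(kw)s^{-w}$, namely $k\zeta'(0)-\zeta(0)\ln s=-k\ln\sqrt{2\pi}+\tfrac12\ln s$, using $\zeta(0)=-1/2$ and $\zeta'(0)=-\ln\sqrt{2\pi}$.
\end{itemize}
These two residues give exactly the three terms in \eqref{eq:lnPk}.

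The remaining integral on $\Re w=-\delta$ is bounded by $s^{\delta}\int|\Gamma(-\delta+\imath y)\zeta(1-\delta+\imath y)\zeta(-k\delta+\imath y)|\,dy=O(s^\delta)=o(1)$. This holds because $\Gamma$ decays exponentially in $|y|$, while the zeta factors grow at most polynomially in vertical strips (by the convexity bound and the functional equation). The same bounds also justify shifting the contour, since the horizontal segments vanish as $|\Im w|\to\infty$.

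The main obstacle is the careful justification of these analytic steps: the absolute convergence interchange, the polynomial growth of $\zeta(kw)$ near $\Re w=-\delta$ (i.e.\ of $\zeta$ at real part $-k\delta$, which we control through the functional equation), and the correct Laurent bookkeeping at the double pole at $0$. The residue computation above is where sign or constant errors are most likely, so we will double-check it by comparing with the known case $k=1$, where the formula reduces to the Dedekind eta modular relation $\ln P_1(e^{-s})=\pi^2/(6s)+\frac12\ln(s/2\pi)+o(1)$, consistent with $\omega_{1,0}=\zeta(2)$.
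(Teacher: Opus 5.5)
Your proposal is correct, and it reaches the lemma by a genuinely different route from the paper.

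\textbf{The paper's route.} The paper stays on the real line. With $\delta=s^{1/k}$ and $h(x)=-\ln(1-e^{-x^k})$, it applies Euler--Maclaurin summation of order~$2$ to $\sum_{j\ge1}h(j\delta)$. The term $\frac12\ln s$ then emerges as the net of two contributions: $k\ln\delta$ from the truncated integral $\frac1\delta\int_0^\delta h$ (where $h(u)\approx-k\ln u$), and $-\frac k2\ln\delta$ from the boundary term $\frac12 h(\delta)$. The constant $-k\ln\sqrt{2\pi}$ comes from identifying the limit of the remainder, $\frac{k}{12}-k\int_1^\infty B_2(\{x\})/(2x^2)\,dx$, through the Stirling identity \eqref{eq:Stirlingidentity}.

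\textbf{Your route.} In your Mellin argument all three terms are read off from two residues. At the double pole at $w=0$, the Euler-constant cancellation in $\Gamma(w)\zeta(w+1)$ is exactly what leaves $k\zeta'(0)-\zeta(0)\ln s$. Your Laurent bookkeeping there is right. Stirling's constant now enters through $\zeta'(0)=-\ln\sqrt{2\pi}$ rather than through the Bernoulli-polynomial integral.

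\textbf{What each buys.} The paper's route is more elementary: it needs no analytic continuation of $\zeta$, no growth bounds in vertical strips, and no complex Stirling estimate for $\Gamma$. Because it relies on dominated convergence, however, it only yields the $o(1)$ error. Your route uses heavier machinery but gives:
\begin{itemize}
\item a quantitative error $O(s^{\delta})$, and, by shifting further left past the poles of $\Gamma$, a complete asymptotic expansion in powers of $s$;
\item an immediate adaptation to $Q_k$: replacing $\zeta(w+1)$ by $(1-2^{-w})\zeta(w+1)$ leaves only a simple pole at $w=0$, with residue $\zeta(0)\ln 2=-\ln\sqrt2$, which is exactly the constant in the corresponding lemma for $\ln Q_k(e^{-s})$ in Section~\ref{sec:distinct}.
\end{itemize}

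A minor remark: the Mellin transform of $-\ln(1-e^{-x})$ is already valid for $\Re w>0$. Restricting to $\Re w>1$ is harmless.
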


The level of precision of Lemma~\ref{lem:lnPk} is just what is needed to get, simply by exponentiating~\eqref{eq:lnPk}, the following corollary.

\begin{corollary}\label{cor:Pkasympt}
Fix an integer $k\ge 1$, then
\[
P_k(e^{-s}) \sim \sqrt{\frac{s}{(2\pi)^k}}\;\exp\!\left(\frac{1}{k}\,\zeta(1+1/k)\,\Gamma(1/k)\,\frac{1}{s^{1/k}}\right) \sim \sqrt{\frac{s}{(2\pi)^k}}\;\exp\!\left(\omega_{k,0}\,\frac{1}{s^{1/k}}\right), \quad \text{as } s\downarrow 0\,.
\]
\end{corollary}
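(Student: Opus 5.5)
The statement is Corollary~\ref{cor:Pkasympt}, and it follows by exponentiating the expansion of Lemma~\ref{lem:lnPk}.

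\textbf{Step 1: exponentiate.} Exponentiating~\eqref{eq:lnPk} gives
\[
P_k(e^{-s}) = \exp\!\Big(\omega_{k,0}\, s^{-1/k}\Big)\, s^{1/2}\, (2\pi)^{-k/2}\, e^{o(1)}, \qquad \text{as } s\downarrow 0.
\]
Since $e^{o(1)}\to 1$, the ratio of $P_k(e^{-s})$ to $\sqrt{s/(2\pi)^k}\,\exp(\omega_{k,0}s^{-1/k})$ tends to $1$. This is the second asymptotic equivalence in the statement.

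\textbf{Step 2: identify the constant.} The first form of the statement follows by recalling the definition~\eqref{eq:omegakm} with $m=0$, which gives $\omega_{k,0}=\frac1k\zeta(1+1/k)\Gamma(1/k)$. The two displayed expressions are therefore literally equal.

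\textbf{Main obstacle.} The only point requiring care is that the error term in Lemma~\ref{lem:lnPk} is $o(1)$ and not merely $o(s^{-1/k})$. With a weaker error, exponentiating would lose the multiplicative equivalence, since $\exp(o(s^{-1/k}))$ need not tend to $1$. This is exactly why the lemma is stated to that precision, and so no further difficulty arises.
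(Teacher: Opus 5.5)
Your proposal is correct and is exactly the paper's argument: the corollary is obtained by exponentiating the $o(1)$-precise expansion of Lemma~\ref{lem:lnPk}, using $e^{\frac12\ln s - k\ln\sqrt{2\pi}}=\sqrt{s/(2\pi)^k}$ and the definition $\omega_{k,0}=\frac1k\zeta(1+1/k)\Gamma(1/k)$. Your remark that an $o(1)$ error term, rather than merely $o(s^{-1/k})$, is what preserves the multiplicative equivalence is precisely the point the paper makes when it says the lemma has ``just the level of precision needed.''
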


In the proof of Lemma~\ref{lem:lnPk} we will resort to Euler--Maclaurin summation of order~$2$ in
the following two formats. Here $B_2(t)=t^2-t+\frac{1}{6}$ is the second Bernoulli polynomial
and $\{x\}$ denotes fractional part.

\medskip

\noindent(a) For $\varphi\in C^2[1,\infty)$ and positive integer~$N$.
\begin{equation}\label{eq:EM1}
\sum_{j=1}^{N} \varphi(j) = \int_1^N \varphi(x)\, dx + \frac{1}{2}\big(\varphi(N)+\varphi(1)\big) + \frac{1}{12}\big(\varphi'(N)-\varphi'(1)\big) - \int_1^N \varphi''(x)\,\frac{B_2(\{x\})}{2}\, dx\,.
\end{equation}

\noindent(b) For $\varphi\in C^2[1,\infty)$ with $\varphi(x)\to 0$, $\varphi'(x)\to 0$ as $x\to\infty$, and $\varphi''$ absolutely
integrable on $[1,\infty)$,
\begin{equation}\label{eq:EM2}
\sum_{j=1}^{\infty} \varphi(j) = \int_1^{\infty} \varphi(x)\, dx + \frac{1}{2}\,\varphi(1) - \frac{1}{12}\,\varphi'(1) - \int_1^{\infty} \varphi''(x)\,\frac{B_2(\{x\})}{2}\, dx\,.
\end{equation}

From~\eqref{eq:EM1} applied to $\varphi(x)=\ln(x)$, one obtains the precise standard expression
\begin{equation}\label{eq:lnNfact}
\ln(N!) = N\ln(N) - N + \frac{1}{2}\ln(N) + 1 - \frac{1}{12} + \int_1^N \frac{B_2(\{x\})}{2x^2}\, dx + \frac{1}{12N}\,.
\end{equation}

Stirling's formula implies that $\ln(N!) - (N\ln(N) - N + \frac{1}{2}\ln(N))$ converges to $\ln(\sqrt{2\pi})$ as
$N\to\infty$. Therefore, letting $N\to\infty$ in~\eqref{eq:lnNfact} gives the identity
\[
\ln(\sqrt{2\pi}) = 1 - \frac{1}{12} + \int_1^{\infty} \frac{B_2(\{x\})}{2x^2}\, dx\,,
\]
or
\begin{equation}\label{eq:Stirlingidentity}
\frac{1}{12} - \int_1^{\infty} \frac{B_2(\{x\})}{2x^2}\, dx = 1 - \ln(\sqrt{2\pi}),
\end{equation}
to be used shortly in the proof of~\eqref{eq:lnPk}.

\begin{proof}[Proof of Lemma~\ref{lem:lnPk}]
Set $\delta = s^{1/k}$ and define, for $x>0$,
\[
h(x) = -\ln(1-e^{-x^k}),
\]
and $\varphi(x) = h(x\delta)$. Then $\ln P_k(e^{-s}) = \sum_{j\ge 1} \varphi(j)$.

Since $\varphi$ and its derivatives decay exponentially towards~$0$ we may apply~\eqref{eq:EM2} and
write
\begin{align}\label{eq: sum_asymptotic_P_k}
\sum_{j\ge 1} h(j\delta) = I + \frac{1}{2}\, h(\delta) + R\,,
\end{align}
where $I = \int_1^{\infty} h(x\delta)\, dx$ and
\begin{equation}\label{eq:remainder}
R = -\frac{1}{12}\,\delta\, h'(\delta) - \delta^2 \int_1^{\infty} h''(x\delta)\,\frac{B_2(\{x\})}{2}\, dx.
\end{equation}

\medskip

For the integral term $I$ of \eqref{eq: sum_asymptotic_P_k} we have, substituting $u=x\delta$, that
\[
I = \frac{1}{\delta}\int_{\delta}^{\infty} h(u)\, du = \frac{1}{\delta}\int_0^{\infty} h(u)\, du - \frac{1}{\delta}\int_0^{\delta} h(u)\, du.
\]
The value of the full integral is
\[
\int_0^{\infty} h(u)\, du = \frac{1}{k}\,\Gamma\!\left(\frac{1}{k}\right)\zeta\!\left(1+\frac{1}{k}\right) = \omega_{k,0}.
\]
For the truncated piece, write $h(u) = -k\ln u + \Delta(u)$ where $\Delta(u) = \ln({u^k}/(1-e^{-u^k}))$
is continuous at~$0$ with $\Delta(0)=0$. Then $\int_0^{\delta} h(u)\, du = -k\delta\ln\delta + k\delta + o(\delta)$, giving
\[
I = \frac{\omega_{k,0}}{\delta} + k\ln\delta - k + o(1).
\]

\medskip

For the boundary term of \eqref{eq: sum_asymptotic_P_k} we have, since $h(\delta) = -k\ln\delta + \Delta(\delta)$, that 
\[
\frac{1}{2}\, h(\delta) = -\frac{k}{2}\ln\delta + o(1)\,,
\]
while for the remainder $R$ of \eqref{eq: sum_asymptotic_P_k} we have, using $h'(u) = -\frac{k u^{k-1}}{e^{u^k}-1}$, that
\[
\delta\, h'(\delta) = -\frac{k\delta^k}{e^{\delta^k}-1} = -k + o(1),
\]
and thus
\[
-\frac{1}{12}\,\delta\, h'(\delta) = \frac{k}{12} + o(1).
\]

For the integral in~\eqref{eq:remainder}: since $|h''(u)|\le C/u^2$ for all $u>0$ (the singularity at~$0$ is
exactly of order~$u^{-2}$, and the decay at~$\infty$ is exponential), we have $|h''(x\delta)\delta^2| \le C/x^2$,
which is integrable on $[1,\infty)$ independently of~$\delta$. Since $h''(x\delta)\delta^2 \to k/x^2$ pointwise as
$\delta\downarrow 0$, dominated convergence gives
\[
\delta^2 \int_1^{\infty} h''(x\delta)\,\frac{B_2(\{x\})}{2}\, dx \longrightarrow k\int_1^{\infty} \frac{B_2(\{x\})}{2x^2}\, dx, \qquad \text{as } \delta\downarrow 0\,.
\]

Combining with the derivative correction and appealing to~\eqref{eq:Stirlingidentity}, we get that
\[
R \longrightarrow \frac{k}{12} - k\int_1^{\infty} \frac{B_2(\{x\})}{2x^2}\, dx = k(1-\ln(\sqrt{2\pi})) \qquad \text{as } \delta\downarrow 0\,.
\]

\medskip

In summary: using $\delta=s^{1/k}$ and $\ln\delta = \frac{1}{k}\ln s$:
\begin{align*}
\ln P_k(e^{-s}) &= \underbrace{\frac{\omega_{k,0}}{\delta} + (k\ln\delta - k)}_{I} \underbrace{{}-\frac{k}{2}\ln\delta}_{\frac{1}{2}h(\delta)} + \underbrace{k - k\ln\sqrt{2\pi}}_{R} + o(1)\\
&= \frac{\omega_{k,0}}{s^{1/k}} + \frac{k}{2}\ln\delta - k\ln(\sqrt{2\pi}) + o(1) = \frac{\omega_{k,0}}{s^{1/k}} + \frac{1}{2}\ln s - k\ln(\sqrt{2\pi}) + o(1). \qedhere
\end{align*}
\end{proof}

For the mean $m_k(e^{-s})$ we just need the simple extra precision recorded in the following
lemma.

\begin{lemma}\label{lem:mk}
Fix an integer $k\ge 1$. Then
\begin{equation}\label{eq:mk}
m_k(e^{-s}) = \omega_{k,1}\,\frac{1}{s^{1+1/k}} + O\!\left(\frac{1}{s}\right), \qquad \text{as } s\downarrow 0\,.
\end{equation}
\end{lemma}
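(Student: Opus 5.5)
The plan is to mirror the proof of Lemma~\ref{lem:lnPk}, using Euler--Maclaurin at lower order. Set $\delta=s^{1/k}$ and
\[
\psi(u)=\frac{u^k e^{-u^k}}{1-e^{-u^k}}=\frac{u^k}{e^{u^k}-1}, \qquad u>0.
\]
By the expression for $F_k'(-s)$ we then have
\[
m_k(e^{-s})=F_k'(-s)=\frac{1}{\delta^k}\sum_{j\ge1}\psi(j\delta).
\]
The function $\psi$ extends continuously to $[0,\infty)$ with $\psi(0)=1$. Since $\psi(u)=\phi(u^k)$ with $\phi(y)=y/(e^y-1)$ smooth, $\psi$ is bounded and $C^1$ on $[0,\infty)$, and its derivative decays exponentially at infinity. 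For $k\ge2$ one has $\psi'(0)=0$; for $k=1$, $\psi'(0)=-1/2$. In either case $\psi'$ is bounded and integrable on $[0,\infty)$.

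The key step is the first-order Euler--Maclaurin (or Abel summation) comparison of sum and integral:
\[
\Big|\sum_{j\ge1}\psi(j\delta)-\int_0^\infty \psi(x\delta)\,dx\Big|\le \int_0^\infty |\psi'(x\delta)|\,\delta\,dx=\int_0^\infty|\psi'(u)|\,du<\infty .
\]
To justify it, write $\sum_{j\ge1}\psi(j\delta)-\int_0^\infty\psi(x\delta)\,dx=\sum_{j\ge1}\int_{j-1}^{j}\big(\psi(j\delta)-\psi(x\delta)\big)\,dx$. Each summand is bounded in absolute value by $\int_{j-1}^{j}|\psi'(y\delta)|\,\delta\,dy$. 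Summing over $j$ gives the estimate above.

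Next, the substitution $u=x\delta$ gives
\[
\int_0^\infty\psi(x\delta)\,dx=\frac1\delta\int_0^\infty\psi(u)\,du=\frac{\omega_{k,1}}{\delta}.
\]
Here $\int_0^\infty \psi(u)\,du=\frac1k\zeta(1+1/k)\Gamma(1+1/k)=\omega_{k,1}$, as already computed in the sketch of proof of Lemma~\ref{lem:asympt derivatives Fk}; it can also be checked directly by substituting $y=u^k$ and expanding $1/(e^y-1)$ geometrically. Hence
\[
\sum_{j\ge1}\psi(j\delta)=\frac{\omega_{k,1}}{\delta}+O(1).
\]
Multiplying by $\delta^{-k}=1/s$ yields
\[
m_k(e^{-s})=\frac{\omega_{k,1}}{s^{1+1/k}}+O\!\left(\frac1s\right),
\]
which is~\eqref{eq:mk}.

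The only point needing care is the regularity of $\psi$ at $0$, which is what makes $\int_0^\infty|\psi'|$ finite. This holds because $\phi$ is smooth with $\phi(0)=1$, so $\psi'(u)=k u^{k-1}\phi'(u^k)$ is bounded near $0$ for every $k\ge1$. No sharper constant is needed, so the zeroth/first-order comparison suffices and the Bernoulli-polynomial refinements used in Lemma~\ref{lem:lnPk} are unnecessary here.
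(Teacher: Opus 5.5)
Your proof is correct and is essentially the paper's argument: compare $\sum_{j\ge1}\psi(j\delta)$ with $\int_0^\infty\psi(x\delta)\,dx$ for $\psi(u)=u^k/(e^{u^k}-1)$ up to an $O(1)$ error, then rescale with $\delta=s^{1/k}$. The paper gets the $O(1)$ from the monotonicity of $\psi$, which yields the signed bound $0\le \omega_{k,1}s^{-1-1/k}-m_k(e^{-s})\le 1/s$. You instead use the bounded-variation estimate $\int_0^\infty|\psi'(u)|\,du$, which equals exactly $\psi(0)=1$ because $\psi$ is decreasing.
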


\begin{proof}
Define the function~$\varphi(x)$ for $x\ge 0$ by $\varphi(x) = x^k/(e^{x^k}-1)$ for $x>0$ and
$\varphi(0)=1$. The function~$\varphi$ is continuous in $[0,\infty)$ and decreases monotonically from
$1=\varphi(0)$ to $0=\lim_{x\to\infty}\varphi(x)$, since $\psi(x)=x/(e^x-1)$ is monotonically decreasing, $x^k$
is monotonically increasing and $\varphi(x)=\psi(x^k)$.
\smallskip

From monotonicity and since $\varphi$ is bounded above by~$1$, we have that
\[
\sum_{j=1}^{\infty} s\,\varphi(sj) \le \int_0^{\infty} \frac{x^k}{e^{x^k}-1}\, dx = \omega_{k,1} \le \sum_{j=1}^{\infty} s\,\varphi(sj) + s.
\]
This, in terms of~$m_k$, simply says that
\[
0 \le \omega_{k,1} - s^{k+1} m_k(e^{-s^k}) \le s\,, \qquad \text{for any } s>0\,.
\]
Replacing now $s$ by $s^{1/k}$ and dividing by $s^{(k+1)/k}$, the above inequality becomes
\[
0 \le \omega_{k,1}\,\frac{1}{s^{1+1/k}} - m_k(e^{-s}) \le s\,, \qquad \text{for any } s>0\,,
\]
which implies the statement of the lemma.
\end{proof}

As a corollary of Lemma~\ref{lem:mk} we have:

\begin{corollary}\label{cor:mkasympt}
Fix an integer $k\ge 1$ and define
\[
\tilde{m}_k(e^{-s}) = \omega_{k,1}\,\frac{1}{s^{1+1/k}}\,, \qquad \text{for any } s>0.
\]
Then
\[
\frac{m_k(e^{-s})-\tilde{m}_k(e^{-s})}{\sigma_k(e^{-s})} = O(s^{1/(2k)}), \qquad \text{as } s\downarrow 0\,.
\]
\end{corollary}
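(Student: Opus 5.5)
The plan is to combine the two estimates already in hand: the numerator is controlled by Lemma~\ref{lem:mk}, and the denominator by the variance asymptotics~\eqref{eq:mkasympt}.

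First, the proof of Lemma~\ref{lem:mk} gives more than stated. It gives the explicit two-sided bound
\[
0 \le \tilde{m}_k(e^{-s}) - m_k(e^{-s}) \le \frac{1}{s},
\]
obtained after replacing $s$ by $s^{1/k}$ and dividing by $s^{1+1/k}$. (The final displayed inequality there reads $\le s$, but the scaling actually gives $s^{1/k}/s^{1+1/k} = 1/s$, consistent with the $O(1/s)$ in~\eqref{eq:mk}.) So the numerator satisfies
\[
|m_k(e^{-s})-\tilde{m}_k(e^{-s})| = O(1/s)
\]
as $s\downarrow 0$.

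Second, for the denominator we invoke~\eqref{eq:mkasympt}, which comes from Lemma~\ref{lem:asympt derivatives Fk} with $m=2$ via $\sigma_k^2(e^{-s})=F_k''(-s)$:
\[
\sigma_k^2(e^{-s}) \sim \omega_{k,2}\, s^{-2-1/k}.
\]
Since $\omega_{k,2}>0$, there is $s_0>0$ such that $\sigma_k(e^{-s}) \ge \tfrac12\sqrt{\omega_{k,2}}\, s^{-1-1/(2k)}$ for $0<s<s_0$.

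Dividing the two bounds gives
\[
\left|\frac{m_k(e^{-s})-\tilde{m}_k(e^{-s})}{\sigma_k(e^{-s})}\right| \le \frac{2}{\sqrt{\omega_{k,2}}}\, s^{-1}\, s^{1+1/(2k)} = \frac{2}{\sqrt{\omega_{k,2}}}\, s^{1/(2k)},
\]
which is exactly the claimed $O(s^{1/(2k)})$.

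There is no real obstacle here. The only care needed is the exponent bookkeeping in the rescaling $s\mapsto s^{1/k}$ inside Lemma~\ref{lem:mk}, to confirm that the error is $O(1/s)$. One should also note that only the asymptotic equivalence of $\sigma_k^2$, and not an error term for it, is needed for the lower bound on the denominator.
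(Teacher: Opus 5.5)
Your proof is correct and follows the paper's own argument exactly. You divide the $O(1/s)$ bound on the numerator from Lemma~\ref{lem:mk} by the lower bound $\sigma_k(e^{-s})\ge c\,s^{-1-1/(2k)}$ obtained from~\eqref{eq:mkasympt}. Your remark that the final display in the proof of Lemma~\ref{lem:mk} should read $\le 1/s$ rather than $\le s$ is also right, and the $1/s$ version is the one consistent with~\eqref{eq:mk}.
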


\begin{proof}
Lemma~\ref{lem:mk} gives that
\[
m_k(e^{-s}) - \tilde{m}_k(e^{-s}) = O(1/s), \qquad \text{as } s\downarrow 0\,.
\]
From~\eqref{eq:mkasympt} we have that
\[
\sigma_k(e^{-s}) \sim \omega_{k,2}^{1/2}\,\frac{1}{s^{1+1/(2k)}}\,, \qquad \text{as } s\downarrow 0\,.
\]
Therefore,
\[
\frac{m_k(e^{-s})-\tilde{m}_k(e^{-s})}{\sigma_k(e^{-s})} = O(s^{1/(2k)}), \qquad \text{as } s\downarrow 0\,.
\]
This concludes the proof.
\end{proof}

\subsection{Strong Gaussianity of power partitions}\label{subsec:strongGaussianityproof}

In this section we verify
that the Khinchin family $(X^{[k]}_t)_{t\in[0,1)}$ is strongly Gaussian.
\smallskip

For each $t\in(0,1)$, we let $s=-\ln t > 0$ so that $e^{-s}=t$. We write~$s$ rather than~$s(t)$
as no confusion will arise.

\subsubsection{Estimation of a Diophantine sum}

Fix an integer $k \ge 1$. We define $W_k(s,\phi)$  for $s >0$ and $\phi \in \R$ as the following Diophantine sum
$$W_k(s,\phi)=\sum_{m \in I_s} \| m^k \phi/(2\pi)\|^2\,,
$$
where $I_s$ denotes the interval in $\N$ of those integers $m$ such that
$$\frac{1}{(4s)^{1/k}} <m \le \frac{1}{(2s)^{1/k}}$$
and where, for $x \in \R$, we use $\|x\|$ to denote distance of $x$ to $\mathbb{Z}$:
$$\|x\|=\min\{|x-n|:\,n \in \Z\}\,.$$

The following lower bound of  $W_k(s,\phi)$, due to Tenenbaum, Wu and Li in~\cite{TWLarxiv}, is the key to the confirmation of the strong Gaussianity of $P_k$, and also of $Q_k$ later on. It is stated and verified  within the proof of Lemma~2.3 in~\cite{TWLarxiv}.

\begin{lemmaL}[Tenenbaum,Wu and Li]\label{lema:estimate of W}
There are positive constants $d_1, d_2>0$ depending only on~$k$, such that for every $s>0$,
$$W_k(s,\phi)\ge \begin{cases}d_1 \phi^2/s^{2+1/k}, &\mbox{for $|\phi|\le 2 \pi s$}, \\ \\
d_2 /s^{1/k}, &\mbox{for $2\pi s\le |\phi|\le \pi$}\,.\end{cases}$$
\end{lemmaL}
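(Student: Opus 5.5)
We may assume $s\le s_0$ for a small $s_0=s_0(k)$. For larger $s$ the interval $I_s$ has boundedly many points, and is empty once $s>1/2$. So the bounds are only meaningful, and only needed, for small $s$: the second range $2\pi s\le|\phi|\le\pi$ is itself empty for $s>1/2$. Write $N=(2s)^{-1/k}$ and $x=\phi/(2\pi)$, so that $|x|\le 1/2$. Then $I_s=(2^{-1/k}N,N]\cap\N$ has $|I_s|\ge c_kN\asymp s^{-1/k}$ elements once $s\le s_0$. Both bounds then say that a positive proportion of $m\in I_s$ have $\|m^kx\|$ bounded below by the natural quantity.

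\emph{First range, $|x|\le s$.} For $m\in I_s$ we have $m^k|x|\le |x|/(2s)\le 1/2$, so no reduction modulo $1$ occurs and $\|m^kx\|=m^k|x|\ge |x|/(4s)$. Summing over $I_s$ gives
\[
W_k(s,\phi)\ge |I_s|\,\frac{x^2}{16s^2}\ge c\,\frac{\phi^2}{s^{2+1/k}}.
\]
This case is immediate.

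\emph{Second range, $s\le|x|\le 1/2$.} It suffices to show that for some $\delta,\eta>0$ depending only on $k$,
\[
\#\{m\in I_s:\ \|m^kx\|\ge\delta\}\ge\eta N,
\]
since then $W_k\ge \eta\delta^2N\asymp s^{-1/k}$. I would argue by contradiction. Suppose that all but $\eta N$ of the $m\in I_s$ satisfy $\|m^kx\|<\delta$. Take a smooth or Fej\'er-type $1$-periodic bump detecting $\|y\|<\delta$ (Erd\H{o}s--Tur\'an style). Comparing with the uniform density shows that for some $1\le h\le H=H(\delta)$,
\[
\Big|\sum_{m\in I_s}e(hm^kx)\Big|\ge c(\delta)N .
\]
A large Weyl sum over a polynomial of degree $k$ forces rational structure. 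By Weyl's inequality together with the standard major-arc converse (e.g. Vaughan, \emph{The Hardy--Littlewood Method}, Lemma~2.4, or Baker's theorem on Weyl sums), there exist $q\le C$ and $a$ with $(a,q)=1$ such that $|hx-a/q|\le C N^{-k}\asymp Cs$. Here $C$ depends only on $k$ and $\delta$. This Weyl-sum-to-rational-approximation step is the main obstacle: it is where the genuinely Diophantine input from Tenenbaum--Wu--Li lives, and I would quote that converse rather than reprove it.

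It then remains to contradict the assumption on these major arcs, which I would do in two subcases. Write $x=a/(hq)+\beta$ with $|\beta|\le C's$, and reduce to lowest terms $x=a'/q'+\beta$ with $q'\le hq\le C''$.

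\emph{Subcase $q'\ge2$.} Restrict to $m$ in a residue class with $m\equiv1\pmod{q'}$ and to a subinterval of $I_s$ of length $\varepsilon N$. On it $m^k\beta$ varies by at most $O(\varepsilon)$, while $m^ka'/q'\equiv a'/q'$ is at distance $\ge1/q'\ge 1/C''$ from $\mathbb{Z}$. Hence for suitable $\varepsilon$ either $\|m^kx\|\ge 1/(3C'')$ on that subinterval, or the same holds on a neighbouring one. Such $m$ form a proportion $\gg1$ of $I_s$.

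\emph{Subcase $q'=1$.} Then $x=\beta$ with $s\le|\beta|\le C's$. The map $m\mapsto m^k\beta$ on $I_s$ is monotone, sweeps an interval of length $\ge|\beta|N^k(1-1/2)\ge 1/4$, and has consecutive increments $\asymp |\beta|N^{k-1}\asymp N^{-1}$. Therefore at least a fixed proportion of the $m$ land in $\{y:\|y\|\ge1/16\}$.

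Choosing $\delta$ below both of these thresholds and $\eta$ accordingly contradicts the assumption, which yields $W_k(s,\phi)\ge d_2/s^{1/k}$.
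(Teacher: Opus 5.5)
The paper gives no proof of this lemma; it imports it from the proof of Lemma~2.3 of Tenenbaum--Wu--Li, so your argument has to stand on its own. Your first range is correct. Your restriction to $0<s\le s_0(k)$ is in fact necessary, not just convenient: for $s>1/2$ (and, for large $k$, for some smaller $s$ as well) $I_s$ is empty, so $W_k(s,\phi)=0$ and the first bound fails as literally stated.

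In the second range there is a genuine gap: the final choice of constants is circular. Everything you extract from the contradiction hypothesis depends on $\delta$. This includes the frequency bound $H(\delta)$ (a bump adapted to scale $\delta$, as you set it up, needs $H\gtrsim 1/\delta$), the Weyl-sum ratio $c(\delta)$, and hence, as you say yourself, $C=C(k,\delta)$. So $q'\le hq\le C''(\delta)$ with $C''(\delta)\gtrsim 1/\delta$. In the subcase $q'\ge 2$, the residue class $m\equiv 1\pmod{q'}$ then only gives $\|m^kx\|\ge 1/(3C''(\delta))\lesssim\delta$. That does not contradict $\|m^kx\|<\delta$. Your instruction ``choose $\delta$ below both thresholds'' would require $\delta<1/(3C''(\delta))$, which cannot be arranged. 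For example, take $x=1/p$ with $p\approx 1/\delta$ prime: your argument only sees $\|m^kx\|\ge 1/p$, whereas what actually rules this $x$ out is that $m^k$ is spread out modulo $p$.

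The repair is to stay at frequency $h=1$, so the arithmetic data no longer depend on $\delta$. Since $\|y\|^2\ge (1-\cos 2\pi y)/(2\pi^2)$,
\[
W_k(s,\phi)\ge \frac{1}{2\pi^2}\big(|I_s|-\Re S(x)\big),\qquad S(x)=\sum_{m\in I_s}e^{2\pi\imath m^k x},
\]
so it suffices to show $\Re S(x)\le (1-\kappa)|I_s|$ for some $\kappa=\kappa(k)>0$. If this fails, then $|S(x)|\ge |I_s|/2\gg N$. The inverse theorem at this \emph{fixed} ratio gives $q\le C_0(k)$ and $|x-a/q|\le C_0N^{-k}$. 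Your two subcases then produce a threshold and a proportion depending only on $k$ and $C_0$, and you choose $\kappa$ afterwards.

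Two further points need fixing.
\begin{itemize}
\item Weyl's inequality, or Baker's theorem, alone gives only $q\ll N^{\varepsilon}$. To get $q$ bounded independently of $N$, combine it with the major-arc approximation $S(x)=q^{-1}G(q,a)\,v(\beta)+O\big(q^{1/2+\varepsilon}(1+N^k|\beta|)^{1/2}\big)$. Here $G(q,a)=\sum_{r=1}^{q}e^{2\pi\imath a r^k/q}\ll q^{1-1/k}$, and $v(\beta)$ is the integral of $e^{2\pi\imath\beta y^k}$ over the real interval underlying $I_s$, with $v(\beta)\ll N(1+N^k|\beta|)^{-1/k}$. Alternatively, quote a quantitative inverse theorem of Green--Tao type.
\item In the subcase $q'\ge 2$, the ``neighbouring subinterval'' step does not work when $|\beta|N^k$ is moderate, because adjacent blocks of length $\varepsilon N$ shift $m^k\beta$ by only $O(\varepsilon|\beta|N^k)$. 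Instead, run your $q'=1$ sweeping argument along the progression $m\equiv 1\pmod{q'}$. If $|\beta|N^k\le 1/(2q')$, every such $m$ has $\|m^kx\|\ge 1/(2q')$. Otherwise $m^k\beta$ sweeps monotonically through an interval of length $\gg 1/q'$ with increments $O(q'C_0/N)$, so a fixed proportion of these $m$ stay away from $\mathbb{Z}-a'/q'$.
\end{itemize}
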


\subsubsection{Bounds on the characteristic functions of $P_k$ and of $Q_k$}\label{subsubsec:bounds}

We have

\begin{lemma}\label{lem:cotas de G_k en terminos de W} For some universal constant $C>0$, it holds, for each integer $k\ge 1$, that
$$
\ln\frac{Q_k(e^{-s})}{|Q_k(e^{-s -\imath \phi})|}\ge C \, W_k(s,\phi)\, , \quad \mbox{for each $s>0$ and $\phi \in \R$}\,.
$$
\end{lemma}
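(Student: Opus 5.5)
We work factor by factor in the product $Q_k(z)=\prod_{m\ge 1}(1+z^{m^k})$. Since $Q_k$ has no zeros in $\mathbb{D}$,
\[
\ln\frac{Q_k(e^{-s})}{|Q_k(e^{-s-\imath\phi})|}=\sum_{m\ge 1}\ln\frac{1+r_m}{|1+r_m e^{-\imath m^k\phi}|}\,,\qquad r_m=e^{-m^k s}\in(0,1).
\]
By the triangle inequality every term is nonnegative. So we may discard all terms with $m\notin I_s$ and bound each of the remaining terms from below by a multiple of $\|m^k\phi/(2\pi)\|^2$. Summing over $m\in I_s$ then gives $C\,W_k(s,\phi)$.

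The elementary estimate we need is, for $r\in(0,1)$ and $\alpha\in\mathbb{R}$,
\[
\frac{|1+re^{\imath\alpha}|^2}{(1+r)^2}=1-\frac{2r(1-\cos\alpha)}{(1+r)^2}=1-\frac{4r\sin^2(\alpha/2)}{(1+r)^2}.
\]
Using $-\ln(1-x)\ge x$, this gives
\[
\ln\frac{1+r}{|1+re^{\imath\alpha}|}\ge \frac{2r\sin^2(\alpha/2)}{(1+r)^2}\ge \frac{r}{2}\sin^2(\alpha/2).
\]
Next, $|\sin(\alpha/2)|=|\sin(\pi\cdot \alpha/(2\pi))|\ge 2\|\alpha/(2\pi)\|$, because $|\sin \pi x|\ge 2\|x\|$ (concavity of sine on $[0,\pi/2]$ together with periodicity and evenness). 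Hence each factor contributes at least $2r\,\|\alpha/(2\pi)\|^2$. We apply this with $\alpha=-m^k\phi$; the sign does not matter since $\|\cdot\|$ is even.

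It remains to bound $r_m$ from below uniformly for $m\in I_s$, and this is the only place the choice of $I_s$ enters. For $m\le (2s)^{-1/k}$ we have $m^k s\le 1/2$, so $r_m\ge e^{-1/2}$. Therefore, for $m\in I_s$,
\[
\ln\frac{1+r_m}{|1+r_me^{-\imath m^k\phi}|}\ge 2e^{-1/2}\,\|m^k\phi/(2\pi)\|^2.
\]
Summing over $m\in I_s$ and dropping the other (nonnegative) terms gives the lemma with the universal constant $C=2e^{-1/2}$, independent of $k$, $s$ and $\phi$.

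There is no real obstacle here. The only points needing care are the nonnegativity of the discarded terms and the inequality $|\sin \pi x|\ge 2\|x\|$. If $I_s$ is empty, then $W_k=0$ and the claim is trivial.
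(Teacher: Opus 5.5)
Your proof is correct and is essentially the paper's argument: the factorwise identity $|1+re^{\imath\alpha}|^2=(1+r)^2-4r\sin^2(\alpha/2)$, the inequality $1-x\le e^{-x}$, restriction to $m\in I_s$ using nonnegativity of the discarded terms, and $|\sin \pi x|\ge 2\|x\|$. The only difference is cosmetic: you bound $(1+r_m)^2\le 4$, whereas the paper uses $m^k s>1/4$ to get $(1+r_m)^2\le (1+e^{-1/4})^2$, so you obtain the universal constant $2e^{-1/2}$ instead of the paper's slightly larger $8e^{-1/2}/(1+e^{-1/4})^2$.
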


\begin{proof} We start by observing that
$$ |1+e^{-u+\imath \theta}|^2=(1+e^{-u})^2-4 e^{-u} \sin^2(\theta/2)\,, \quad \mbox{for any $u, \theta\in \R$}\,,$$
here we have used the relation $\sin^2(\theta/2) = (1-\cos(\theta))/2$, which holds for any $\theta \in \R$.
\smallskip

Using that $(1+y)\le e^y$, for every $y \in \R$, we deduce that
\[
\frac{\left|1 + e^{-u - \imath\theta}\right|^2}{(1 + e^{-u})^2}
= 1 - \frac{4e^{-u}\sin^2(\theta/2)}{(1 + e^{-u})^2}
\leq \exp\!\left(-\frac{4e^{-u}\sin^2(\theta/2)}{(1 + e^{-u})^2}\right)
\]
\
Substituting $u = m^k s$ and $\theta = m^k \phi$, we get that

\begin{equation}
\tag{$\dagger$}
\ln \frac{Q_k(e^{-s})^2}{|Q_k(e^{-s - \imath\phi})|^2}
\geq \sum_{m \geq 1} \frac{4e^{-m^k s}\sin^2(m^k \phi/2)}{(1 + e^{-m^k s})^2}
\end{equation}

For $m \in I_s$ we have that $\frac{1}{4} < m^k s \leq \frac{1}{2}$, and therefore
\[
e^{-m^k s} \geq e^{-1/2}\quad \mbox{and} \quad
(1 + e^{-m^k s}) \leq 1 + e^{-1/4}\,,
\]
and so
\[
\frac{4e^{-m^k s}}{(1 + e^{-m^k s})^2}
\geq \frac{4e^{-1/2}}{(1 + e^{-1/4})^2} := \Delta
\]

Since  $\sin^2(\pi x) \geq 4\|x\|^2$, for each  $x \in \R$,
it follows from $(\dagger)$ that
\[
\ln \frac{Q_k(e^{-s})^2}{|Q_k(e^{-s - \imath \phi})|^2}
\geq 4\Delta \cdot W_k(s,\phi)
\]
as stated, with $C=2 \Delta$. \end{proof}

From the key Lemma \ref{lema:estimate of W} combined with Lemma \ref{lem:cotas de G_k en terminos de W} we deduce that

\begin{lemma}\label{lem:bounds of char de Q}
There are positive constants $d_1, d_2>0$ depending only on~$k$ such that
\[
\frac{|Q_k(e^{-s+\imath\phi})|}{Q_k(e^{-s})} \le
\begin{cases}
\exp\!\big(-d_1 \phi^2 s^{-(2+1/k)}\big), & \text{if } |\phi|\le 2\pi s\,,\\[4pt]
\exp\!\big(-d_2 s^{-1/k}\big), & \text{if } 2\pi s < |\phi| \le \pi\,.
\end{cases}
\]
\end{lemma}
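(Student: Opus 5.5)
This follows by chaining Lemma~\ref{lem:cotas de G_k en terminos de W} with the lower bounds of Lemma~\ref{lema:estimate of W} and then exponentiating. First, $|Q_k(e^{-s+\imath\phi})|=|Q_k(e^{-s-\imath\phi})|$, because $Q_k$ has real coefficients and the two arguments are complex conjugates. Also $W_k(s,\phi)=W_k(s,-\phi)$, since $\|{-x}\|=\|x\|$. So we may apply Lemma~\ref{lem:cotas de G_k en terminos de W} with either sign of $\phi$. It gives, for every $s>0$ and every $\phi\in\R$,
\[
\frac{|Q_k(e^{-s+\imath\phi})|}{Q_k(e^{-s})}\le \exp\big(-C\,W_k(s,\phi)\big),
\]
where $C>0$ is the universal constant of that lemma.

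Next we split into the two ranges of $\phi$ appearing in Lemma~\ref{lema:estimate of W}. If $|\phi|\le 2\pi s$, that lemma gives $W_k(s,\phi)\ge d_1'\phi^2/s^{2+1/k}$. If $2\pi s\le|\phi|\le\pi$, it gives $W_k(s,\phi)\ge d_2'/s^{1/k}$. Here $d_1',d_2'$ denote the Tenenbaum--Wu--Li constants. Substituting these into the displayed exponential bound yields the two cases of the statement, with $d_1=C d_1'$ and $d_2=C d_2'$. Both depend only on $k$ because $C$ is universal. The boundary value $|\phi|=2\pi s$ is covered by the first case; the statement's second range uses a strict inequality there, which causes no conflict.

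There is no real obstacle, since all the analytic content sits in Lemma~\ref{lema:estimate of W}, which we take as given. The only points needing care are the conjugation symmetry, which reconciles the sign convention $e^{-s-\imath\phi}$ of Lemma~\ref{lem:cotas de G_k en terminos de W} with the $e^{-s+\imath\phi}$ of the statement, and the observation that the ranges of $\phi$ match exactly. One might add that the estimate does not really need $|\phi|\le\pi$, since it only concerns $e^{\imath\phi}$. But $W_k$ is $2\pi$-periodic in $\phi$ only when $k^{\text{th}}$ powers are taken mod $1$ after division by $2\pi$, so we simply keep the restriction $|\phi|\le\pi$ as stated.
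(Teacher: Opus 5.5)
Your proposal is correct and is exactly the paper's argument: Lemma~\ref{lem:cotas de G_k en terminos de W} gives the ratio bound $\exp(-C\,W_k(s,\phi))$, and inserting the Tenenbaum--Wu--Li lower bounds of Lemma~\ref{lema:estimate of W} yields the two cases with $d_i=C\,d_i'$. Your conjugation-symmetry remark supplies the sign-convention detail ($e^{-s-\imath\phi}$ versus $e^{-s+\imath\phi}$) that the paper leaves implicit. Your closing aside is inaccurate but harmless: $W_k(s,\phi)$ is always $2\pi$-periodic in $\phi$ because each $m^k$ is an integer, and the restriction $|\phi|\le\pi$ is genuinely needed (at $\phi=2\pi$ the ratio equals $1$); this does not matter, since you keep the stated ranges.
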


From Lemma \ref{lem:bounds of char de Q} and the comparison \eqref{eq:bound from Q_k to P_k} we obtain

\begin{lemmaL}\label{lem:bounds of char de P}
There are positive constants $d_1, d_2>0$ depending only on~$k$ such that
\[
\frac{|P_k(e^{-s+\imath\phi})|}{P_k(e^{-s})} \le
\begin{cases}
\exp\!\big(-d_1 \phi^2 s^{-(2+1/k)}\big), & \text{if } |\phi|\le 2\pi s\,,\\[4pt]
\exp\!\big(-d_2 s^{-1/k}\big), & \text{if } 2\pi s < |\phi| \le \pi\,.
\end{cases}
\]
\end{lemmaL}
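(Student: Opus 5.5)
The bound follows directly from Lemma~\ref{lem:bounds of char de Q} and the comparison inequality~\eqref{eq:bound from Q_k to P_k}. I will use the same constants $d_1,d_2$ that Lemma~\ref{lem:bounds of char de Q} provides.

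Fix $s>0$ and $\phi$ with $|\phi|\le\pi$, and set $z=e^{-s+\imath\phi}\in\mathbb{D}$, so that $|z|=e^{-s}$. Inequality~\eqref{eq:bound from Q_k to P_k} gives
\[
\frac{|P_k(e^{-s+\imath\phi})|}{P_k(e^{-s})}\le \frac{|Q_k(e^{-s+\imath\phi})|}{Q_k(e^{-s})}.
\]
It is worth recalling why this holds. For each $n$ we have $(1-|w|)/|1-w|\le |1+w|/(1+|w|)$ with $w=z^{n^k}$. This is equivalent to $(1-|w|^2)\le |1-w^2|$, which is just the reverse triangle inequality. Multiplying over $n$ and using absolute convergence of the products in $\mathbb{D}$ gives the comparison factor by factor.

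Lemma~\ref{lem:bounds of char de Q} then bounds the right-hand side in each of the two ranges of $\phi$, namely $|\phi|\le 2\pi s$ and $2\pi s<|\phi|\le\pi$, and these are exactly the asserted bounds.

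Lemma~\ref{lem:bounds of char de Q} itself comes from combining Lemma~\ref{lem:cotas de G_k en terminos de W} with Lemma~\ref{lema:estimate of W}. Exponentiating the first gives $|Q_k(e^{-s-\imath\phi})|/Q_k(e^{-s})\le e^{-C W_k(s,\phi)}$. The substitution $\phi\mapsto-\phi$ is harmless, since $W_k(s,-\phi)=W_k(s,\phi)$ and $|Q_k(\bar z)|=|Q_k(z)|$. Inserting the two lower bounds for $W_k$ then yields the statement, with the constants $d_j$ replaced by $Cd_j$.

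The one point that needs a little care is the boundary case $|\phi|=2\pi s$, where both of the Tenenbaum--Wu--Li bounds are available, so either case applies. There is no real obstacle here: the substantive analytic input is Lemma~\ref{lema:estimate of W}, which is taken from~\cite{TWLarxiv}.
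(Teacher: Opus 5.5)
Your proof is correct and follows the paper's route. You reduce to $Q_k$ via the comparison \eqref{eq:bound from Q_k to P_k}, then apply Lemma~\ref{lem:bounds of char de Q}, which comes from Lemma~\ref{lem:cotas de G_k en terminos de W} combined with the Tenenbaum--Wu--Li lower bound for $W_k$ in Lemma~\ref{lema:estimate of W}. Your remarks on the symmetry $\phi\mapsto-\phi$ and on the factorwise inequality $1-|w|^2\le|1-w^2|$ supply small details that the paper leaves implicit.
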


This Lemma~\ref{lem:bounds of char de P} is Lemma~2.3 of~\cite{TWLarxiv} of Tenenbaum, Wu and Li,  in the notation of the present  paper.
\smallskip

The argument in~\cite{TWLarxiv} derives Lemma \ref{lem:bounds of char de P}  from the estimate recorded in Lemma \ref{lema:estimate of W}; we have just inserted the estimates for $Q_k$ in between.
\smallskip

The threshold $2\pi s$ may be replaced by any pair of overlapping thresholds $B_2 s < B_1 s$
(with $0<B_2<B_1$), at the cost of adjusting the constants $d_1, d_2$ to $D_1, D_2$ which now
depend on~$k$ and also on $B_1$ and $B_2$, so that
\begin{equation}\label{eq:TWLgeneral}
\frac{|P_k(e^{-s+\imath\phi})|}{P_k(e^{-s})} \le
\begin{cases}
\exp\!\big(-D_1 \phi^2 s^{-(2+1/k)}\big), & \text{if } |\phi|\le B_1 s\,,\\[4pt]
\exp\!\big(-D_2 s^{-1/k}\big), & \text{if } B_2 s < |\phi| \le \pi\,.
\end{cases}
\end{equation}
Notice that in the overlap $B_2 s \le |\phi| \le B_1 s$, one has that $\phi^2 s^{-(2+1/k)}$ is comparable to
$s^{-1/k}$, so both bounds give comparable decay.

For the characteristic function of~$\breve{X}^{[k]}_t$ we have that
\[
\big|\E\!\big(e^{\imath \theta\breve{X}^{[k]}_t}\big)\big| = \frac{\big|P_k(e^{-s+\imath \theta/\sigma_k(e^{-s})})\big|}{P_k(e^{-s})}\,,
\]
since taking the modulus eliminates the phase term.
\smallskip

From~\eqref{eq:mkasympt} we have, for positive constants $a<A$, that
\begin{equation}\label{eq:sigmabounds}
a \le \sigma_k(e^{-s})\, s^{1+(1/(2k))} < A\,, \qquad \text{for } s\in(0,\ln 2).
\end{equation}

The bound on $\big|\E\!\big(e^{\imath \theta\breve{X}^{[k]}_t}\big)\big|$ we are after is recorded in the following corollary.

\begin{corollaryL}\label{cor:bounds for char Pk}
For any constant $C>0$ there are positive constants $c_1$ and $c_2$ depending
only on~$k$ and~$C$ such that for $s\in(0,\ln 2)$ we have that
\[
\big|\E\!\big(e^{\imath \theta\breve{X}^{[k]}_t}\big)\big| \le
\begin{cases}
e^{-c_1\theta^2}, & \text{if } |\theta|\le C\,\dfrac{1}{s^{1/(2k)}}\,,\\[8pt]
e^{-c_2\frac{1}{s^{1/k}}}, & \text{if } |\theta|\ge C\,\dfrac{1}{s^{1/(2k)}}\,.
\end{cases}
\]
\end{corollaryL}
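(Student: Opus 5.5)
The plan is to substitute $\phi=\theta/\sigma_k(e^{-s})$ in the general bound \eqref{eq:TWLgeneral} and use the two-sided estimate \eqref{eq:sigmabounds} to translate conditions on $\phi$ into conditions on $\theta$. Writing $\sigma=\sigma_k(e^{-s})$, we have
\[
\big|\E\big(e^{\imath\theta\breve{X}^{[k]}_t}\big)\big|=\frac{|P_k(e^{-s+\imath\phi})|}{P_k(e^{-s})},\qquad \phi=\theta/\sigma .
\]
Since $P_k(e^{-s+\imath\phi})$ is $2\pi$-periodic in $\phi$, it suffices to treat $|\phi|\le\pi$. Otherwise we reduce $\phi$ modulo $2\pi$; the reduced $\phi$ either lies in $B_2s<|\phi|\le\pi$, or it is small, and the small case needs care.

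\emph{First range.} Suppose $|\theta|\le C s^{-1/(2k)}$. Then $|\phi|=|\theta|/\sigma\le |\theta| s^{1+1/(2k)}/a\le (C/a)s$. We take $B_1=C/a$ and choose $B_2>0$ with $B_2<B_1$; for definiteness $B_2=\min(B_1/2,\,C/(2A))$. If $\pi\ge B_1 s$, i.e.\ $s$ is small, the first case of \eqref{eq:TWLgeneral} applies and gives
\[
\exp\big(-D_1\phi^2 s^{-(2+1/k)}\big)=\exp\big(-D_1\theta^2\sigma^{-2}s^{-(2+1/k)}\big)\le \exp\big(-(D_1/A^2)\theta^2\big),
\]
because $\sigma^{-2}s^{-(2+1/k)}\ge A^{-2}$. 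So $c_1=D_1/A^2$. When $s$ is not small ($s\in(0,\ln2)$ but $B_1 s>\pi$), $\phi$ may exceed $\pi$. Then we use periodicity: for $|\phi|\le\pi$ the first bound still holds because $|\phi|\le \pi< B_1 s$. For $|\phi|>\pi$, the quantity $\theta^2$ is bounded by a constant depending on $C,k$, while the reduced angle lies at distance at least $B_2 s$ from $2\pi\mathbb Z$ unless it is near a multiple of $2\pi$. I would avoid this subtlety by noting that for $s$ in a compact subrange $[s_0,\ln 2)$ all quantities are comparable to constants. On that subrange it suffices to show $|P_k(e^{-s+\imath\phi})|<P_k(e^{-s})$ uniformly away from $\phi\in2\pi\mathbb Z$, and to use the quadratic bound near $2\pi\mathbb Z$. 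That is a routine compactness argument after shrinking the constants. I expect this bookkeeping of the non-small $s$ (equivalently, large $\phi$ wrapping around) to be the main nuisance, not a real obstacle.

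\emph{Second range.} Suppose $|\theta|\ge C s^{-1/(2k)}$. Then $|\phi|\ge (C/A) s\ge B_2 s$. Reducing $\phi$ modulo $2\pi$ to $\phi'\in(-\pi,\pi]$, there are two cases.

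If $|\phi'|>B_2 s$, the second case of \eqref{eq:TWLgeneral} gives $\exp(-D_2 s^{-1/k})$.

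If $|\phi'|\le B_2 s$, the first case gives $\exp(-D_1\phi'^2s^{-2-1/k})$. This is only useful if $\phi'$ is not too small, so the genuinely delicate point is $\phi$ near nonzero multiples of $2\pi$. At $\phi=2\pi$ exactly the modulus equals $1$, so no such bound can hold for unbounded $\theta$. The statement must therefore implicitly restrict to $|\theta|\le\pi\sigma$, i.e.\ $|\phi|\le\pi$, which is the range that matters for the strong Gaussianity integral $\int_{-\pi\sigma}^{\pi\sigma}$. I will read the corollary with this restriction. Then no wrapping occurs, and $B_2 s\le|\phi|\le\pi$ yields $c_2=D_2$ directly.

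With the restriction $|\phi|\le\pi$ in force, the first-range argument also simplifies: the first bound applies whenever $|\phi|\le\min(\pi,B_1 s)$, and $|\phi|\le\pi$ always holds. This completes the plan, with $c_1=D_1/A^2$ and $c_2=D_2$.
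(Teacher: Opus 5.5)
Your final argument is exactly the paper's: with $\phi=\theta/\sigma_k(e^{-s})$, the bounds \eqref{eq:sigmabounds} give $|\phi|\le (C/a)s$ in the first range and $|\phi|\ge (C/A)s$ in the second, and \eqref{eq:TWLgeneral} with $B_1=C/a$ and any $B_2\le C/A$, $B_2<B_1$, yields $c_1=D_1/A^2$ and $c_2=D_2$. You are right that the corollary, like \eqref{eq:TWLgeneral} itself, must be read with $|\theta|\le\pi\sigma_k(e^{-s})$, which is the only range where the paper uses it: at $\theta=2\pi\sigma_k(e^{-s})$ the modulus equals $1$, and for large $C$ even the first range reaches that point once $s\ge 2\pi A/C$, so your compactness detour for non-small $s$ cannot work and should simply be dropped.
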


\begin{proof}
We take $B_1 = C/a$ and $B_2 = C/A$ (note that $a<A$ by~\eqref{eq:sigmabounds}, so $B_2<B_1$). Let
$D_1$ and $D_2$ be as in~\eqref{eq:TWLgeneral} and $c_1 = D_1/A^2$ and $c_2 = D_2$.
\smallskip

If $|\theta|\le C s^{-1/(2k)}$, then $|\phi|\le (C/a)s = B_1 s$ and so by~\eqref{eq:TWLgeneral} and~\eqref{eq:sigmabounds} we get that
\[
\big|\E\!\big(e^{\imath \theta\breve{X}^{[k]}_t}\big)\big| \le \exp(-(D_1/A^2)\theta^2) = \exp(-c_1\theta^2).
\]
If $|\theta|\ge C s^{-1/(2k)}$, then $|\phi|\ge (C/A)s = B_2 s$ and so by~\eqref{eq:TWLgeneral} we get that
\[
\big|\E\!\big(e^{\imath \theta\breve{X}^{[k]}_t}\big)\big| \le \exp(-D_2 s^{-1/k}) = \exp(-c_2 s^{-1/k}). \qedhere
\]
\end{proof}

\subsubsection{Strong Gaussianity of~$P_k$}

Now we have at our disposal all the ingredients to verify that $P_k$ is strongly Gaussian. 

\begin{theorem}\label{thm:strongGaussianity}
The Khinchin family $(X^{[k]}_t)_{t\in[0,1)}$ associated with the generating function
$P_k(z)$ of partitions into $k$-th powers is strongly Gaussian.
\end{theorem}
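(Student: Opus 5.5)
We have to verify the two conditions in the definition of strong Gaussianity. The first one, $\sigma_k^2(t)\to+\infty$ as $t\uparrow 1$, is immediate from \eqref{eq:mkasympt}. For the second, write $t=e^{-s}$ and split the range of integration $|\theta|\le \pi\sigma_k(e^{-s})$ at a threshold $M>0$, which is fixed first and later sent to infinity. On $|\theta|\le M$ we will use Gaussianity (Corollary~\ref{cor:gaussianity of Pk}): the characteristic functions $\E(e^{\imath\theta\breve{X}^{[k]}_t})$ converge pointwise to $e^{-\theta^2/2}$ and are bounded by $1$. Bounded convergence on the compact interval $[-M,M]$ then gives
\[
\int_{-M}^{M}\big|\E(e^{\imath\theta\breve{X}^{[k]}_t})-e^{-\theta^2/2}\big|\,d\theta\to 0\quad\text{as } s\downarrow 0.
\]

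On the complement $M<|\theta|\le\pi\sigma_k(e^{-s})$ we will bound the two terms separately, using $|\E(e^{\imath\theta\breve X})-e^{-\theta^2/2}|\le |\E(e^{\imath\theta\breve X})|+e^{-\theta^2/2}$. The Gaussian tail contributes $\int_{|\theta|>M}e^{-\theta^2/2}\,d\theta$, which is small once $M$ is large, uniformly in $s$. For the characteristic function we apply Corollary~\ref{cor:bounds for char Pk} with some fixed constant, say $C=1$, restricting to $s\in(0,\ln 2)$. In the middle range $M<|\theta|\le s^{-1/(2k)}$ the bound $e^{-c_1\theta^2}$ gives a contribution at most $\int_{|\theta|>M}e^{-c_1\theta^2}\,d\theta$, again small for large $M$ uniformly in $s$. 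In the outer range $s^{-1/(2k)}\le|\theta|\le\pi\sigma_k(e^{-s})$, note that $|\theta|\le\pi\sigma_k$ corresponds exactly to $|\phi|=|\theta|/\sigma_k\le\pi$, so Lemma~\ref{lem:bounds of char de P} legitimately applies there. The bound $e^{-c_2 s^{-1/k}}$ is then integrated over an interval of length at most $2\pi\sigma_k(e^{-s})\le 2\pi A\, s^{-1-1/(2k)}$ by \eqref{eq:sigmabounds}. This gives a contribution of at most
\[
2\pi A\, s^{-1-1/(2k)}\,e^{-c_2 s^{-1/k}},
\]
which tends to $0$ as $s\downarrow0$, since exponential decay in $s^{-1/k}$ beats any power of $s^{-1}$.

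Putting the pieces together, we take the $\limsup$ as $s\downarrow 0$ and then let $M\to\infty$, which yields the $L^1$ condition. The main obstacle is the uniform control of the characteristic function over the whole range up to $\pi\sigma_k$, away from a neighbourhood of $\theta=0$ of size $o(\sigma_k)$. Here the Diophantine input of Tenenbaum, Wu and Li, packaged in Corollary~\ref{cor:bounds for char Pk}, is essential, and it is already available. The only remaining care is bookkeeping: checking that the thresholds line up, which they do since the corollary covers all $|\theta|$ with $|\phi|\le\pi$, and that $s<\ln 2$ is harmless for the limit $s\downarrow 0$.
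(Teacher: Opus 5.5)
Your proposal is correct and follows the paper's proof: you use Gaussianity for pointwise convergence, Corollary~\ref{cor:bounds for char Pk} for the bound $e^{-c_1\theta^2}$ up to $Cs^{-1/(2k)}$, and the estimate $\pi\sigma_k(e^{-s})\,e^{-c_2 s^{-1/k}}\to 0$ on the outer range, exactly as the paper does. The only difference is cosmetic: the paper applies dominated convergence directly on the growing range $|\theta|\le C s^{-1/(2k)}$ with dominating function $e^{-c_1\theta^2}+e^{-\theta^2/2}$, while you split at a fixed $M$, use bounded convergence on $[-M,M]$ with uniform tail bounds, and then let $M\to\infty$, which is an equivalent formulation.
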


\begin{proof}
Corollary~\ref{cor:gaussianity of Pk} tells us that the family $(X^{[k]}_t)_{t\in[0,1)}$ is Gaussian. Also from~\eqref{eq:mkasympt}, we
see that $\lim_{s\downarrow 0}\sigma_k(e^{-s})=+\infty$.
\smallskip

Now, for $|\theta|\le C/s^{1/(2k)}$ and $s\in(0,\ln 2)$ (or $t\in(1/2,1)$), we have from Corollary~\ref{cor:bounds for char Pk}
that
\[
\big|\E\!\big(e^{\imath \theta\breve{X}^{[k]}_t}\big) - e^{-\theta^2/2}\big| \le e^{-c_1\theta^2} + e^{-\theta^2/2},
\]
and thus Gaussianity and the Dominated Convergence Theorem give that
\begin{equation}\label{eq:SG1}
\lim_{s\downarrow 0} \int_{|\theta|\le C/s^{1/(2k)}} \big|\E\!\big(e^{\imath \theta\breve{X}^{[k]}_t}\big) - e^{-\theta^2/2}\big|\, d\theta = 0\,.
\end{equation}

We also have that
\begin{equation}\label{eq:SG2}
\lim_{s\downarrow 0} \int_{|\theta|\ge C/s^{1/(2k)}} e^{-\theta^2/2}\, d\theta = 0\,.
\end{equation}

From Corollary~\ref{cor:bounds for char Pk}, for $s\in(0,\ln 2)$, we get the bound
\[
\int_{\pi\sigma_k(e^{-s})\ge\theta\ge C/s^{1/(2k)}} \big|\E\!\big(e^{\imath \theta\breve{X}^{[k]}_t}\big)\big|\, d\theta \le \pi\sigma_k(e^{-s})\, e^{-c_2/s^{1/k}}.
\]
Taking into account~\eqref{eq:sigmabounds}, we see that $\lim_{s\downarrow 0}\sigma_k(e^{-s})\, e^{-c_2/s^{1/k}}=0$, since exponential decay
beats polynomial growth. Therefore, we have that
\begin{equation}\label{eq:SG3}
\lim_{s\downarrow 0} \int_{\pi\sigma_k(e^{-s})\ge\theta\ge C/s^{1/(2k)}} \big|\E\!\big(e^{\imath \theta\breve{X}^{[k]}_t}\big)\big|\, d\theta = 0\,.
\end{equation}

From~\eqref{eq:SG2} and~\eqref{eq:SG3} we conclude that
\begin{equation}\label{eq:SG4}
\lim_{s\downarrow 0} \int_{\pi\sigma_k(e^{-s})\ge|\theta|\ge C/s^{1/(2k)}} \big|\E\!\big(e^{\imath \theta\breve{X}^{[k]}_t}\big) - e^{-\theta^2/2}\big|\, d\theta = 0\,.
\end{equation}

Finally, the combination of~\eqref{eq:SG1} and~\eqref{eq:SG4} gives the strong Gaussianity of $P_k(z)$.
\end{proof}

\section{Asymptotic formula of power partitions}\label{sec:asymptotic}

We have seen in Theorem~\ref{thm:strongGaussianity} that~$P_k$ is strongly Gaussian, and thus, given the approximation of~$m_k$ from Corollary~\ref{cor:mkasympt},  we are ready to apply the version of Hayman's asymptotic
formula registered in Theorem~\ref{thm:baezduarte} to obtain the asymptotic formula Hardy-Ramanujan and Wright for partitions into
$k$-th powers.

\begin{theoremL}\label{thm:asymptotic of pk(n)}
Fix an integer $k\ge 1$, then
\[
p_k(n) \sim \frac{1}{(2\pi)^{(k+1)/2}} \cdot \frac{\Omega_k^{k/(k+1)}}{(1+1/k)^{1/2}} \cdot \frac{1}{n^{(3k+1)/(2k+2)}}\, \exp\!\Big((k+1)\,\Omega_k^{k/(k+1)}\, n^{1/(k+1)}\Big), \quad \text{as } n\to\infty,
\]
where $\Omega_k = \frac{1}{k}\,\zeta(1+1/k)\,\Gamma(1+1/k)$.
\end{theoremL}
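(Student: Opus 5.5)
We apply Theorem~\ref{thm:baezduarte} to $f=P_k$, which is strongly Gaussian by Theorem~\ref{thm:strongGaussianity}. We take as approximate mean the function $\tilde m_k(e^{-s})=\Omega_k s^{-1-1/k}$ of Corollary~\ref{cor:mkasympt}. It is continuous and increasing in $t=e^{-s}$ and tends to $+\infty$ as $t\uparrow 1$. Corollary~\ref{cor:mkasympt} gives $(m_k-\tilde m_k)/\sigma_k=O(s^{1/(2k)})\to0$, which is exactly condition~\eqref{eq:BDapprox}. For the variance we use $\tilde\sigma_k^2(e^{-s})=(1+1/k)\Omega_k s^{-2-1/k}$ from~\eqref{eq:mkasympt} and~\eqref{eq:omegarelations}. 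Formula~\eqref{eq:BDfinal} then gives
\[
p_k(n)\sim \frac{1}{\sqrt{2\pi}}\,\frac{P_k(e^{-s_n})\,e^{n s_n}}{\tilde\sigma_k(e^{-s_n})},
\]
where $\tau_n=e^{-s_n}$ and $s_n$ is defined explicitly by $\Omega_k s_n^{-(k+1)/k}=n$, that is, $s_n=(\Omega_k/n)^{k/(k+1)}$.

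It remains to substitute. By Corollary~\ref{cor:Pkasympt},
\[
P_k(e^{-s_n})\sim \sqrt{s_n/(2\pi)^k}\,\exp\bigl(k\Omega_k s_n^{-1/k}\bigr),
\]
since $\omega_{k,0}=k\Omega_k$. This step needs the full precision of Lemma~\ref{lem:lnPk}, with an $o(1)$ error in the exponent, and not merely the leading term. Next, $s_n^{-1/k}=(n/\Omega_k)^{1/(k+1)}$, so
\[
k\Omega_k s_n^{-1/k}=k\,\Omega_k^{k/(k+1)}n^{1/(k+1)}.
\]
Also $n s_n=n^{1/(k+1)}\Omega_k^{k/(k+1)}$, so the two exponents add up to $(k+1)\Omega_k^{k/(k+1)}n^{1/(k+1)}$, as claimed.

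For the prefactor, the powers of $2\pi$ combine to $(2\pi)^{-1/2}(2\pi)^{-k/2}=(2\pi)^{-(k+1)/2}$. The remaining factors are
\[
\frac{s_n^{1/2}}{\tilde\sigma_k}=\frac{s_n^{1/2}\,s_n^{1+1/(2k)}}{(1+1/k)^{1/2}\Omega_k^{1/2}}=\frac{s_n^{(3k+1)/(2k)}}{(1+1/k)^{1/2}\Omega_k^{1/2}}.
\]
Inserting $s_n^{(3k+1)/(2k)}=(\Omega_k/n)^{(3k+1)/(2k+2)}$, the power of $\Omega_k$ becomes
\[
\frac{3k+1}{2k+2}-\frac12=\frac{2k}{2k+2}=\frac{k}{k+1},
\]
and the power of $n$ becomes $n^{-(3k+1)/(2k+2)}$. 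This is exactly the stated constant and the stated power of $n$.

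The only delicate points are two. First, the exponential factor must be computed with an error $o(1)$ in the exponent, which Corollary~\ref{cor:Pkasympt} supplies. Second, the B\'aez-Duarte condition must hold, which Corollary~\ref{cor:mkasympt} supplies. The main obstacle is therefore just the careful bookkeeping of exponents; all the analytic input is already in place.
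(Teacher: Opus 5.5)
Your proposal is correct and follows the paper's proof essentially line by line. You use B\'aez-Duarte substitution with $\tilde m_k(e^{-s})=\Omega_k s^{-1-1/k}$, justified by Corollary~\ref{cor:mkasympt}, and take $s_n=(\Omega_k/n)^{k/(k+1)}$; you then substitute $\tau_n^{-n}$, Corollary~\ref{cor:Pkasympt} and $\tilde\sigma_k$ into \eqref{eq:BDfinal}, and your exponent bookkeeping checks out. The only difference is cosmetic: you combine $s_n^{1/2}/\tilde\sigma_k(e^{-s_n})$ into one factor, whereas the paper records $(\star_2)$ and $(\star_3)$ separately.
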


Observe that according to the statement of Theorem~\ref{thm:asymptotic of pk(n)} the constants $\alpha_k$ and $\beta_k$ of
the Hardy--Ramanujan formula~(HR) are given by
\[
\beta_k = (k+1)\,\Omega_k^{k/(k+1)} \qquad \text{and} \qquad \alpha_k = \frac{\Omega_k^{k/(k+1)}}{(2\pi)^{(k+1)/2}\,(1+1/k)^{1/2}}\,,
\]
as they should. Also, for $k=1$, we have $\Omega_1=\zeta(2)\,\Gamma(2)=\pi^2/6$ and $\alpha_1=1/(4\sqrt{3})$ and
$\beta_1=\pi\sqrt{2/3}$, as in the Hardy--Ramanujan asymptotic formula for general partitions.

\begin{proof}
We are going to apply Theorem~\ref{thm:baezduarte} of Báez-Duarte with the approximation~$\tilde{m}_k$ of~$m_k$ given by
\[
\tilde{m}_k(e^{-s}) = \Omega_k\,\frac{1}{s^{1+1/k}}\,, \qquad \text{for any } s>0\,,
\]
whose use is justified by Corollary~\ref{cor:mkasympt}.

The constants $\omega_{k,m}$ and $\Omega_k$ from~\eqref{eq:omegakm} and~\eqref{eq:omegarelations} will intervene in what follows.

For $n\ge 1$, take $\tau_n = e^{-s_n}$, with $s_n$ given by
\begin{equation}\label{eq:sn}
s_n = (\Omega_k/n)^{k/(k+1)},
\end{equation}
so that $\tilde{m}_k(\tau_n) = \tilde{m}_k(e^{-s_n}) = n$.
\smallskip

Next we just have to plug into the general asymptotic formula~\eqref{eq:BDfinal} the formula for
$\tau_n^{-n}$, and the asymptotics, taking into account~\eqref{eq:omegarelations}, of $P_k(e^{-s_n})$ and of $\sigma_k(e^{-s_n})$ provided
respectively by Corollary~\ref{cor:Pkasympt} and~\eqref{eq:mkasympt}.

Observe that from~\eqref{eq:sn}
\begin{equation}\tag{$\star_1$}
\tau_n^{-n} = \exp\!\Big(\Omega_k^{k/(k+1)}\, n^{1/(k+1)}\Big), \qquad \text{for each } n\ge 1\,.
\end{equation}

Using~\eqref{eq:mkasympt} and~\eqref{eq:sn} we have that
\begin{equation}\tag{$\star_2$}
\frac{1}{\sigma_k(e^{-s_n})} \sim \frac{\Omega_k^{k/(2k+2)}}{(1+1/k)^{1/2}} \cdot \frac{1}{n^{(2k+1)/(2k+2)}}\,, \qquad \text{as } n\to\infty.
\end{equation}

And, finally, using Corollary~\ref{cor:Pkasympt}, \eqref{eq:omegarelations} and~\eqref{eq:sn} we obtain that
\begin{equation}\tag{$\star_3$}
P_k(e^{-s_n}) \sim \frac{1}{(2\pi)^{k/2}}\, \frac{\Omega_k^{k/(2k+2)}}{n^{k/(2k+2)}}\, \exp\!\Big(k\,\Omega_k^{k/(k+1)}\, n^{1/(k+1)}\Big), \qquad \text{as } n\to\infty.
\end{equation}

Substituting $(\star_1)$, $(\star_2)$, and $(\star_3)$ into~\eqref{eq:BDfinal} we obtain the result.
\end{proof}

\section{$P_k$ and the Hayman class}\label{sec:Hayman}

In this section we show that the generating function~$P_k$ of partitions into $k$-th powers
belongs to the Hayman class. This notion of Hayman class was originally introduced
in~\cite{Hayman} and has been studied in depth, from the point of view of Khinchin families, in~\cite{CFMK}.

Power series in the Hayman class are strongly Gaussian; see~\cite[Theorem~3.8]{CFMK}. Thus the
result of this section implies Theorem~\ref{thm:strongGaussianity}.

That the generating function of ordinary partitions~$P$ is in the Hayman class appears
in~\cite[Thm.~6.2]{CFMK}.

A power series $f\in\mathcal{K}$ with radius of convergence~$R$ and Khinchin family $(Z_t)_{t\in[0,R)}$
is said to be in the Hayman class if
\begin{equation}\tag{H0}
\lim_{t\uparrow R} \sigma_f(t) = +\infty\,,
\end{equation}
and besides, there exists a function $h\colon [0,R)\to(0,\pi]$, termed the cut, such that the
following two conditions are satisfied
\begin{equation}\tag{H1}
\lim_{t\uparrow R} \sup_{|\theta|\le h(t)\sigma_f(t)} \left|\E\!\left(e^{\imath\theta\breve{Z}_t}\right) e^{\theta^2/2} - 1\right| = 0, \qquad \text{(major arc)};
\end{equation}
and
\begin{equation}\tag{H2}
\lim_{t\uparrow R} \sigma_f(t) \sup_{h(t)\sigma_f(t)<|\theta|\le\pi\sigma_f(t)} \big|\E\!\big(e^{\imath\theta\breve{Z}_t}\big)\big| = 0. \qquad \text{(minor arc)}.
\end{equation}

\smallskip

The next Lemma~\ref{lem:majorarc} provides us with a condition in terms of the fulcrum which implies the
major arc condition.

\begin{lemma}\label{lem:majorarc}
For $f\in\mathcal{K}$ \emph{non-vanishing} in $\mathbb{D}(0,R)$ with Khinchin family $(Z_t)_{t\in[0,R)}$ and
with fulcrum $H(z) = \ln f(e^z)$, for $\Re(z)<\ln R$, and a cut function~$h(t)$, if
\begin{equation}\label{eq:majorarccond}
\lim_{t\uparrow R} \sup_{\varphi\in\mathbb{R}} |H'''(-s+\imath\varphi)| \cdot h(t)^3 = 0\,,
\end{equation}
where $t=e^{-s}$, for $s>\ln(1/R)$, then condition \textup{(H1)} holds.
\end{lemma}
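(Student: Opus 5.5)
The plan is to write the characteristic function through the fulcrum and Taylor-expand to second order with a third-order remainder. With $t=e^{-s}$ and $\sigma=\sigma_f(t)$, the formula for the characteristic function in Section~\ref{subsubsec:charfn} together with $m_f(e^{-s})=H'(-s)$ and $\sigma_f^2(e^{-s})=H''(-s)$ gives
\[
\E\big(e^{\imath\theta\breve{Z}_t}\big)=\exp\Big(H(-s+\imath\theta/\sigma)-H(-s)-\imath\theta H'(-s)/\sigma\Big).
\]
Since $f$ is non-vanishing in $\mathbb{D}(0,R)$, the fulcrum $H$ is holomorphic in the whole half-plane $\{\Re z<\ln R\}$. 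Hence the segment from $-s$ to $-s+\imath\varphi$ lies in the domain for every $\varphi$, which is exactly what lets us use a sup over all $\varphi\in\mathbb{R}$.

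Set $\varphi=\theta/\sigma$. Taylor's formula with integral remainder along the vertical segment gives
\[
H(-s+\imath\varphi)=H(-s)+\imath\varphi H'(-s)-\tfrac{\varphi^2}{2}H''(-s)+\rho,\qquad |\rho|\le \tfrac{|\varphi|^3}{6}\sup_{\psi\in\mathbb{R}}|H'''(-s+\imath\psi)|.
\]
Because $\tfrac{\varphi^2}{2}H''(-s)=\theta^2/2$, the exponent above is exactly $-\theta^2/2+\rho$. Therefore
\[
\E\big(e^{\imath\theta\breve{Z}_t}\big)e^{\theta^2/2}-1=e^{\rho}-1.
\]

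In the major arc $|\theta|\le h(t)\sigma_f(t)$ we have $|\varphi|\le h(t)$. It follows that $|\rho|\le \tfrac16 h(t)^3\sup_{\psi}|H'''(-s+\imath\psi)|$, uniformly in such $\theta$. By hypothesis \eqref{eq:majorarccond}, this bound tends to $0$ as $t\uparrow R$.

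To finish, use $|e^{\rho}-1|\le |\rho|e^{|\rho|}$. This gives
\[
\sup_{|\theta|\le h(t)\sigma_f(t)}\big|\E(e^{\imath\theta\breve{Z}_t})e^{\theta^2/2}-1\big|\to 0,
\]
which is (H1).

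The argument is essentially routine. The one point needing care is justifying the Taylor expansion with a remainder controlled by the sup of $|H'''|$ on the vertical line. This rests on $H$ being a single-valued holomorphic logarithm on the whole half-plane, which is where the non-vanishing assumption is used. Beyond that, one only has to check that the branch of $H$ matches the one giving the characteristic-function formula, which holds because both are real on the real axis.
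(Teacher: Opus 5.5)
Your proof is correct and follows the same route as the paper. The paper quotes the bound $\bigl|\ln \E(e^{\imath\theta\breve{Z}_t})+\theta^2/2\bigr|\le \sup_{\varphi\in\mathbb{R}}|H'''(-s+\imath\varphi)|\,|\theta|^3/(6\sigma_f(t)^3)$ from earlier work on Khinchin families, and you derive it yourself by Taylor expansion with integral remainder along the vertical segment; both then conclude with $|e^w-1|\le |w|e^{|w|}$ on $|\theta|\le h(t)\sigma_f(t)$ (your closing worry about matching branches is unnecessary, since $f(e^z)=e^{H(z)}$, $H'(-s)=m_f(e^{-s})$ and $H''(-s)=\sigma_f^2(e^{-s})$ hold for any holomorphic branch of the logarithm).
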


\begin{proof}
For such $f$ it holds that
\begin{equation}\label{eq:logcharbound}
\left|\ln \E\!\left(e^{\imath\theta\breve{Z}_t}\right) + \frac{\theta^2}{2}\right| \le \frac{\sup_{\varphi\in\mathbb{R}} |H'''(-s+\imath\varphi)|}{\sigma_f(t)^3} \cdot \frac{|\theta|^3}{6}\,, \qquad t=e^{-s},
\end{equation}
see~\cite[Theorem~3.2 and~(4.3)]{CFMK}.

Let us denote
\[
A(t) = \sup_{\varphi\in\mathbb{R}} |H'''(-s+\imath\varphi)| \cdot \frac{h(t)^3}{6}
\]
with $t=e^{-s}$ and $t\in(0,R)$; by hypothesis~\eqref{eq:majorarccond} we have $\lim_{t\uparrow R}A(t)=0$.
\smallskip

From~\eqref{eq:logcharbound} we deduce, for $|\theta|\le h(t)\sigma_f(t)$, that
\[
\left|\ln \E\!\left(e^{\imath \theta\breve{Z}_t}\right) + \frac{\theta^2}{2}\right| \le A(t),
\]
which combined with the numerical inequality $|e^w-1|\le |w|e^{|w|}$, which holds for every
$w\in\mathbb{C}$, gives that
\[
\sup_{|\theta|\le h(t)\sigma_f(t)} \left|\E\!\left(e^{\imath \theta\breve{Z}_t}\right) e^{\theta^2/2} - 1\right| \le A(t)\, e^{A(t)},
\]
which implies (H1), since $A(t)\to 0$, as $t\uparrow R$.
\end{proof}

Next, we turn to the function~$P_k$.

\begin{theorem}\label{thm:Hayman}
The generating function $P_k(z)$ of partitions into $k$-th powers belongs to
the Hayman class.
\end{theorem}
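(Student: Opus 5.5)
We verify (H0), (H1) and (H2) for $P_k$ with a cut of the form $h(t)=s^{\gamma}$, where $t=e^{-s}$ and $\gamma$ is to be chosen. Condition (H0) is already contained in \eqref{eq:mkasympt}: $\sigma_k(e^{-s})\asymp s^{-1-1/(2k)}\to\infty$ as $s\downarrow 0$.

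\emph{Major arc (H1).} We use Lemma~\ref{lem:majorarc}, which applies because $P_k$ does not vanish in $\mathbb{D}$. By \eqref{eq:F3bound} in Remark~\ref{rem:altGaussianityPk},
\[
\sup_{\varphi\in\mathbb{R}}|F_k'''(-s+\imath\varphi)|\le F_k'''(-s).
\]
By Lemma~\ref{lem:asympt derivatives Fk}, $F_k'''(-s)\sim\omega_{k,3}\,s^{-3-1/k}$. Condition \eqref{eq:majorarccond} then reduces to
\[
s^{-3-1/k}\,s^{3\gamma}\to 0,
\]
which holds exactly when $\gamma>1+1/(3k)$.

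\emph{Minor arc (H2).} The set $h(t)\sigma_k<|\theta|\le\pi\sigma_k$ corresponds to $s^{\gamma}<|\phi|\le\pi$, where $\phi=\theta/\sigma_k$. Since $\gamma>1$, we have $s^\gamma<2\pi s$ for small $s$, so this range splits into two pieces.
\begin{itemize}
\item On $2\pi s<|\phi|\le\pi$, Lemma~\ref{lem:bounds of char de P} gives the bound $\exp(-d_2s^{-1/k})$. Multiplied by $\sigma_k\asymp s^{-1-1/(2k)}$, this still tends to $0$.
\item On $s^\gamma<|\phi|\le 2\pi s$, Lemma~\ref{lem:bounds of char de P} gives the bound
\[
\exp\bigl(-d_1\phi^2s^{-2-1/k}\bigr)\le\exp\bigl(-d_1 s^{2\gamma-2-1/k}\bigr).
\]
For this to beat the polynomial factor $\sigma_k$, we need $2\gamma-2-1/k<0$, that is, $\gamma<1+1/(2k)$. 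The exponential then has a negative power of $s$ in its exponent, and its product with $s^{-1-1/(2k)}$ tends to $0$.
\end{itemize}

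\emph{Choice of the cut.} The two requirements are compatible because
\[
1+\tfrac{1}{3k}<1+\tfrac{1}{2k}.
\]
Take $\gamma=1+\tfrac{5}{12k}$ (the midpoint), set $h(t)=\min(s^\gamma,\pi)$, and check that $h$ maps into $(0,\pi]$. The main point is precisely this window: the major-arc bound only uses the crude estimate $|F_k'''|\le F_k'''(-s)$, which forces the cut to be small, while the Gaussian part of Lemma~\ref{lem:bounds of char de P} forces it to be large. The argument works because the exponents $1/(3k)$ and $1/(2k)$ differ. What remains is routine bookkeeping of the constants and of the range $s\in(0,\ln 2)$.
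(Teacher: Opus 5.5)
Your proof is correct and follows the paper's argument: the same cut $s^{\gamma}$ with $\gamma\in(1+1/(3k),\,1+1/(2k))$, the same verification of (H1) via Lemma~\ref{lem:majorarc} and $|F_k'''(-s+\imath\varphi)|\le F_k'''(-s)$, and the same two-piece treatment of the minor arc. The only differences are cosmetic. You split the minor arc in the variable $\phi=\theta/\sigma_k$ at $2\pi s$ and apply Lemma~\ref{lem:bounds of char de P} directly, whereas the paper splits in $\theta$ at $C s^{-1/(2k)}$ using the rescaled version, Corollary~\ref{cor:bounds for char Pk}; and your truncation $h=\min(s^\gamma,\pi)$ is a harmless refinement that guarantees $h$ takes values in $(0,\pi]$.
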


\begin{proof}
We write $t=e^{-s}$ with $s\downarrow 0$ throughout.

Condition (H0) for~$P_k$ holds since from~\eqref{eq:mkasympt}, $\lim_{s\downarrow 0}\sigma_k(e^{-s})=+\infty$.

Next we define an appropriate cut function~$h$. We fix any $\alpha$ in the (nonempty) interval
\begin{equation}\label{eq:alphainterval}
\big(1+1/(3k),\; 1+1/(2k)\big),
\end{equation}
and set $h(e^{-s})=s^{\alpha}$.
\smallskip

We next verify that with this cut~$h$, conditions (H1) and (H2) for the Khinchin family
of~$P_k$ to be in the Hayman class are satisfied. The lower bound  $\alpha > 1+1/(3k)$ is required
to check (H1), while the upper bound  $\alpha < 1+1/(2k)$ is needed to verify (H2).

\medskip

\noindent\textit{Condition \textup{(H1)}.} Equation~\eqref{eq:F3bound} gives
\begin{equation}\label{eq:F3boundbis}
\sup_{\varphi\in\mathbb{R}} \big|F_k'''(-s+\imath\varphi)\big| \le F_k'''(-s), \qquad \text{for } s>0,
\end{equation}
while Lemma~\ref{lem:asympt derivatives Fk} gives that
\[
F_k'''(-s) \sim \omega_{k,3}\, s^{-(3+1/k)}, \qquad \text{as } s\downarrow 0,
\]
and thus since $h(e^{-s})=s^{\alpha}$,
\begin{equation}\label{eq:F3boundtris}
\sup_{\varphi\in\mathbb{R}} \big|F_k'''(-s+\imath\varphi)\big| \cdot h(e^{-s})^3 \le F_k'''(-s)\cdot s^{3\alpha} \sim \omega_{k,3}\, s^{3\alpha-(3+1/k)} \to 0, \qquad \text{as } s\downarrow 0,
\end{equation}
since $\alpha > 1+1/(3k)$. Lemma~\ref{lem:majorarc} then implies that condition (H1) holds for~$P_k$.

\medskip

\noindent\textit{Condition \textup{(H2)}.} The verification of this condition is based upon Corollary~\ref{cor:bounds for char Pk}.

From~\eqref{eq:mkasympt} and the definition of~$h$ we have that
\begin{equation}\label{eq:sigmahasympt}
\begin{aligned}
\sigma_k(e^{-s}) &\sim \frac{\omega_{k,2}^{1/2}}{s^{1+(1/2k)}}\,,\\[4pt]
h(e^{-s})\,\sigma_k(e^{-s}) &\sim \frac{\omega_{k,2}^{1/2}}{s^{1+(1/2k)-\alpha}}\,,
\end{aligned}
\qquad \text{as } s\downarrow 0\,.
\end{equation}

Let $C, c_1, c_2$ be the constants appearing in Corollary~\ref{cor:bounds for char Pk}. Define the following two
sub-arcs of the minor arc
\begin{align*}
I_L(s) &= \big[h(e^{-s})\sigma_k(e^{-s}),\; C s^{-1/(2k)}\big),\\
I_R(s) &= \big[C s^{-1/(2k)},\; \pi\sigma_k(e^{-s})\big).
\end{align*}
For $s$ sufficiently small, these two sub-arcs partition the minor arc, since from~\eqref{eq:sigmahasympt} it
follows that
\begin{equation}\label{eq:arcpartition}
h(e^{-s})\,\sigma_k(e^{-s}) < C s^{-1/(2k)} < \pi\,\sigma_k(e^{-s}), \qquad \text{for  } s>0 \text{ small enough}.
\end{equation}

For $|\theta|\in I_L(s)$, since $|\theta|\ge h(e^{-s})\sigma_k(e^{-s})$, Corollary~\ref{cor:bounds for char Pk} gives that
\begin{equation}\tag{$\flat_L$}
|\E(e^{\imath\theta\breve{X}^{[k]}_t})| \le e^{-c_1(h(e^{-s})\sigma_k(e^{-s}))^2}.
\end{equation}

For $|\theta|\in I_R(s)$, Corollary~\ref{cor:bounds for char Pk} gives directly that
\begin{equation}\tag{$\flat_R$}
|\E(e^{\imath\theta\breve{X}^{[k]}_t})| \le e^{-c_2/s^{1/k}}.
\end{equation}

Let $\beta = \min\{2+1/k-2\alpha,\; 1/k\} > 0$. Appealing to~\eqref{eq:sigmahasympt}, we may combine $(\flat_L)$ and
$(\flat_R)$ and obtain, for a certain constant $c_3>0$ and for $s$ small enough, that
\[
\sup_{h(t)\sigma_k(t)<|\theta|\le\pi\sigma_k(t)} \big|\E\!\big(e^{\imath\theta\breve{X}_t}\big)\big| \le e^{-c_3/s^{\beta}}.
\]
This bound combined with the asymptotics for~$\sigma_k$ of~\eqref{eq:sigmahasympt} gives that (H2) holds.
\end{proof}

As mentioned above, for $k=1$, that is, for the ordinary partition function~$P$, this
result appears in~\cite[Thm.~6.2]{CFMK}. The proof above extends the argument in~\cite{CFMK} to all
$k\ge 1$, with Lemmas~\ref{lem:asympt derivatives Fk} and~\ref{lem:bounds of char de P} replacing the corresponding estimates therein.

\section{Partitions into distinct $k$-th powers}\label{sec:distinct}

In this section we show  that $Q_k$, the generating function of partitions into \emph{distinct} $k$-th powers, is, like $P_k$, a Hayman function.

\medskip

From the relation \eqref{eq:QkPkrelation} between $Q_k$ and $P_k$, we will derive that the variance of the family of $Q_k$ goes to infinity and, by appealing to Lemma \ref{lem:majorarc},  that condition (H1), the major arc condition, holds, with the same cut function as for $P_k$.

That condition (H2) holds for $Q_k$ will be derived from the key estimate of Lemma \ref{lema:estimate of W}.

\medskip

As for notation: we denote the fulcrum of~$Q_k$ by~$G_k$, thus $$G_k(z)=\ln(Q_k(e^z))\,, \quad \mbox{for $\Re(z)<0$},$$ and the mean and variance functions of~$Q_k$ are denoted by~$\mu_k(t)$ and~$\eta_k^2(t)$, respectively, for $t \in (0,1)$.

\subsection{Relation between $Q_k$ and $P_k$}
The treatment of \(Q_k\) runs parallel to that of \(P_k\), and we exploit
this throughout. The identity $Q_k(z)={P_k(z)}/{P_k(z^2)}$
lets us read off the fulcrum, the mean and the variance of \(Q_k\) from
those of \(P_k\), while the comparison~\eqref{eq:bound from Q_k to P_k}
transfers the characteristic-function estimates in the opposite
direction. We record these relations before verifying the Hayman
conditions.

\subsubsection{Fulcrum and Gaussianity of $Q_k$.}

From the relation \eqref{eq:QkPkrelation} between $P_k$ and $Q_k$, we deduce that the respective fulcrums $F_k(z)$ and $G_k(z)$ are related by
\begin{equation}\label{eq:QkPkFulcrumrelation}
G_k(z) = F_k(z) - F_k(2z), \qquad \mbox{for each $z\in\C$ such that $\Re(z)<0$}.
\end{equation}

\smallskip

For the derivatives $G_k^{(m)}$ of $G_k$ we deduce from  \eqref{eq:QkPkFulcrumrelation} that
\begin{equation}\label{eq:QkPkFulcrumrelation derivatives}
G_k^{(m)}(s) = F_k^{(m)}(s) - 2^m F_k^{(m)}(2s), \qquad \text{for } s<0 \text{ and } m\ge 1\,,
\end{equation}
and thus, for each $m\ge 1$, the asymptotic formula for~$F_k^{(m)}$ of Lemma~\ref{lem:asympt derivatives Fk} translates into
the following asymptotic formula for~$G_k^{(m)}$:
\begin{equation}\label{eq:Gkasympt}
G_k^{(m)}(-s) \sim (1-2^{-1/k})\,\frac{1}{k}\,\zeta(1+1/k)\,\Gamma(m+1/k)\,\frac{1}{s^{m+1/k}}\,, \qquad \text{as } s\downarrow 0\,.
\end{equation}

From the asymptotic formula~\eqref{eq:Gkasympt} we deduce that, for any $m\ge 3$, there exists a
constant $C_{k,m}>0$ such that
\[
\frac{G_k^{(m)}(-s)}{G_k''(-s)^{m/2}} \sim C_{k,m}\, s^{(m/2-1)/k}, \qquad \text{as } s\downarrow 0\,.
\]
As in the proof of Corollary~\ref{cor:gaussianity of Pk} this implies, via the Gaussianity criterion of Theorem~\ref{thm:gaussianty criterion},
that \emph{$Q_k(z)$ is Gaussian.}

\subsubsection{Asymptotic of variance $\eta_k^2(e^{-s})$ of $Q_k$.}\label{seccion:comparability of variances}

From \eqref{eq:Gkasympt} and Lemma \ref{lem:asympt derivatives Fk}, observe that for derivatives of order $m \ge 2$,
\begin{equation}\label{eq:derivativesPkQk}
\frac{G_k^{(m)}(-s)}{F_k^{(m)}(-s)}\to 1-{2^{-1/k}}\, , \quad \mbox{as $s \downarrow 0$}\,.
\end{equation}

Thus the variances of the Khinchin families of $P_k$ and of $Q_k$ grow to $+\infty$ at the same rate as $t \uparrow 1$, and, in fact,
$$\lim_{s \downarrow 0} \frac{\eta^2_k(e^{-s})}{\sigma^2_k(e^{-s})}=1-{2^{-1/k}}\,,$$
and, because of \eqref{eq:mkasympt}, we also have that
\begin{equation}\label{eq:asymptotic of etak}\eta^2_k(e^{-s})\sim (1-{2^{-1/k}}) \, \omega_{k,2}\,\frac{1}{s^{2+1/k}}\, , \quad \mbox{as $s \downarrow 0$}\,.\end{equation}

In particular, $\lim_{t\uparrow 1} \eta_k(t)=+\infty$.

\begin{remark}\textit{Gaussianity of $Q_k$.}\rm \, In Remark~\ref{rem:altGaussianityPk} we gave an alternative argument to show that $P_k$ is Gaussian, based on the fact that $P_k=\exp(g_k)$ where $g_k$ has nonnegative Taylor coefficients.

For the generating function~$Q_k$ of partitions into distinct $k$-th powers:
\[
Q_k(z) = \prod_{j=1}^{\infty} (1+z^{j^k}),
\]
we may write analogously that $Q_k \equiv \exp(h_k)$, where $h_k$ is the power series
\[
h_k(z) = \sum_{n=1}^{\infty} \frac{\epsilon_k(n)}{n}\, z^n \qquad \text{where} \qquad \epsilon_k(n) = -\sum_{j^k\mid n} (-1)^{n/j^k} j^k\,.
\]
For $k=1$, the coefficients of~$h_k$ are nonnegative: they can be expressed as $\epsilon_1(n) = \sum_{\substack{j\mid n;\\ j\;\text{odd}}} j$. But for $k\ge 2$, the coefficients $\epsilon_k(n)$ change sign infinitely often as $n$ grows.

\smallskip

In order to show that~$Q_k$ is Gaussian, the argument in Remark~\ref{rem:altGaussianityPk} appealing to Theorem~3.2 of~\cite{CFMK} is of no avail, and we have to resort to the Gaussianity criterion involving all of the
derivatives, as we have discussed above.

\end{remark}

\subsubsection{Asymptotic estimates for $Q_k(e^{-s})$.}

Fix $k \geq 1$. Combining the identity \eqref{eq:QkPkFulcrumrelation} connecting $Q_k$ and $P_k$,
with Lemma \ref{lem:lnPk}, which contains the asymptotic of $\ln (P_k(e^{-s}))$, we obtain that
\begin{lemma}\label{lem: ln(Q_k)_asymptotic} For each $k \geq 1$, it holds that
\begin{align*}
	\ln(Q_k(e^{-s})) = (1-2^{-1/k})\,\omega_{k,0}\, s^{-1/k}-\ln(\sqrt{2})+o(1)\,, \quad \text{ as } s \downarrow 0\,.
\end{align*}
\end{lemma}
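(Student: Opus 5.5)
The plan is to read the result off Lemma~\ref{lem:lnPk} via the identity~\eqref{eq:QkPkrelation}, in its fulcrum form~\eqref{eq:QkPkFulcrumrelation}. For real $s>0$ we have $Q_k(e^{-s})=P_k(e^{-s})/P_k(e^{-2s})$, so
\[
\ln(Q_k(e^{-s}))=\ln(P_k(e^{-s}))-\ln(P_k(e^{-2s})).
\]
We apply~\eqref{eq:lnPk} twice: once at $s$, and once at $2s$, which is legitimate because $2s\downarrow 0$ as $s\downarrow 0$. The two $o(1)$ remainders combine into a single $o(1)$.

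The main terms are handled as follows.

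\emph{Leading terms.} They give
\[
\omega_{k,0}\,s^{-1/k}-\omega_{k,0}\,(2s)^{-1/k}=(1-2^{-1/k})\,\omega_{k,0}\,s^{-1/k}.
\]

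\emph{Logarithmic terms.} They give
\[
\tfrac12\ln(s)-\tfrac12\ln(2s)=-\tfrac12\ln 2=-\ln(\sqrt{2}).
\]

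\emph{Constants.} The two terms $-k\ln(\sqrt{2\pi})$ cancel exactly.

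Collecting these yields the stated expansion.

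There is no real obstacle. The only point needing care is that the error term in Lemma~\ref{lem:lnPk} is a genuine $o(1)$ as its argument tends to $0$. It therefore remains $o(1)$ when evaluated at $2s$, and the difference of two $o(1)$ quantities is again $o(1)$. No uniformity beyond this is needed, since the argument only uses the two fixed rescalings $s$ and $2s$.
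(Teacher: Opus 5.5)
Your proposal is correct and follows the paper's own proof. The paper likewise writes $\ln Q_k(e^{-s})=\ln P_k(e^{-s})-\ln P_k(e^{-2s})$ via \eqref{eq:QkPkrelation}, applies Lemma~\ref{lem:lnPk} at $s$ and at $2s$, and collects the leading terms, the logarithmic terms and the cancelling constants exactly as you do.
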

\begin{proof} Lemma \ref{lem:lnPk} gives that
\begin{equation*}
	\ln(P_k(e^{-s})) = \omega_{k,0}\,\frac{1}{s^{1/k}} + \frac{1}{2}\ln(s) - k\ln(\sqrt{2\pi}) + o(1), \qquad \text{as } s\downarrow 0\,,
\end{equation*} and from \eqref{eq:QkPkrelation} we deduce as stated that
\begin{align*}
\ln(Q_k(e^{-s})) = (1-2^{-1/k})\,\omega_{k,0}\, s^{-1/k}-\ln(\sqrt{2})+o(1)\,, \quad \text{ as } s \downarrow 0\,.
\end{align*}
\end{proof}
\newpage

As a direct corollary we have~\begin{corollary}\label{cor: ln(Q_k)_asymptotic} For each integer $k \ge 1$
\begin{align*}
Q_k(e^{-s}) =\frac{1}{\sqrt{2}}\,\exp\!\left(\omega_{k,0}\,(1-2^{-1/k})\,s^{-1/k}\right)\bigl(1+o(1)\bigr), \qquad s\downarrow 0.
\end{align*}
\end{corollary}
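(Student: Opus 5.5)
The corollary follows by exponentiating Lemma~\ref{lem: ln(Q_k)_asymptotic}, so the plan is to record that lemma in the form
\[
\ln(Q_k(e^{-s})) - (1-2^{-1/k})\,\omega_{k,0}\, s^{-1/k} + \ln(\sqrt{2}) = \varepsilon(s), \qquad \varepsilon(s)\to 0 \ \text{ as } s\downarrow 0,
\]
and then apply the exponential to both sides. This gives
\[
Q_k(e^{-s}) = \frac{1}{\sqrt{2}}\,\exp\!\big((1-2^{-1/k})\,\omega_{k,0}\,s^{-1/k}\big)\, e^{\varepsilon(s)}.
\]
Since $e^{\varepsilon(s)} = 1+o(1)$ by continuity of the exponential at $0$, the stated formula follows.

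For completeness I would also make the lemma itself transparent, since its proof is only sketched. By \eqref{eq:QkPkrelation} we have $\ln Q_k(e^{-s}) = \ln P_k(e^{-s}) - \ln P_k(e^{-2s})$. Apply Lemma~\ref{lem:lnPk} at $s$ and at $2s$ and subtract.

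The leading terms combine to $\omega_{k,0}\,(s^{-1/k}-(2s)^{-1/k}) = (1-2^{-1/k})\,\omega_{k,0}\,s^{-1/k}$. The logarithmic terms give $\tfrac12\ln s - \tfrac12\ln(2s) = -\ln\sqrt{2}$. The constants $-k\ln\sqrt{2\pi}$ cancel, and the two $o(1)$ errors sum to $o(1)$.

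There is no real obstacle here. The only point requiring care is the sign and exact value of the constant $-\ln\sqrt2$. It is precisely the second-order term $\tfrac12\ln s$ of Lemma~\ref{lem:lnPk} that produces this constant, which is why the error in that lemma had to be $o(1)$ and not merely $O(1)$. That precision is exactly what makes the exponentiated statement an asymptotic equivalence rather than a bound up to constants.
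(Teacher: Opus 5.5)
Your proposal is correct and is essentially the paper's argument: the corollary is obtained by exponentiating the preceding lemma on $\ln Q_k(e^{-s})$. That lemma is proved exactly as you describe, by writing $\ln Q_k(e^{-s})=\ln P_k(e^{-s})-\ln P_k(e^{-2s})$ via \eqref{eq:QkPkrelation} and subtracting two instances of Lemma~\ref{lem:lnPk}, with the $\tfrac12\ln s$ terms yielding $-\ln\sqrt{2}$ and the $-k\ln\sqrt{2\pi}$ terms cancelling.
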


\subsubsection{Asymptotic of $\mu_k(e^{-s})$.}

Using equation \eqref{eq:QkPkFulcrumrelation derivatives}, with $m=1$, we obtain that
\begin{align}\label{eq: mean_Q_interms_P}
\mu_k(e^{-s}) = m_{k}(e^{-s})-2m_{k}(e^{-2s})\,, \quad \text{ for any }  s>0\,.
\end{align}

Define
\begin{align*}
\tilde{\mu_k}(e^{-s}) = (1-2^{-1/k})\omega_{k,1}\frac{1}{s^{1+1/k}}\,.
\end{align*}
A combination of  \eqref{eq: mean_Q_interms_P} with Lemma \ref{lem:mk} gives the following.
\begin{lemma}\label{lem:asympt to muk}For each integer $k\ge 1$,
\begin{align*}
	\mu_k(e^{-s}) = (1-2^{-1/k})\,\omega_{k,1}\,\frac{1}{s^{1+1/k}}+O(1/s)\,, \quad \text{ as } s \downarrow 0\,,
\end{align*}
and also
\begin{align*}
\frac{	\mu_k(e^{-s}) -\tilde{\mu}_k(e^{-s})}{\eta_k(e^{-s})} = O(s^{1/(2k)})\,, \quad \text{ as } s \downarrow 0\,.
\end{align*}

\end{lemma}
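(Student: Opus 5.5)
The plan is to feed the identity \eqref{eq: mean_Q_interms_P}, $\mu_k(e^{-s}) = m_k(e^{-s}) - 2m_k(e^{-2s})$, into the quantitative estimate of Lemma~\ref{lem:mk}. The proof of that lemma gives the two-sided bound
\[
0 \le \omega_{k,1}\,\frac{1}{s^{1+1/k}} - m_k(e^{-s}) \le \frac{C}{s}, \qquad s>0,
\]
for a constant $C$. Indeed, in that proof the additive error $s$, in the scaled variable where one sets $s = \delta^k$, becomes after unscaling at most a fixed multiple of $s^{-1}$. This matches the $O(1/s)$ in the statement of Lemma~\ref{lem:mk}, and it is all we need.

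Write $m_k(e^{-s}) = \omega_{k,1}s^{-1-1/k} + E(s)$ and $m_k(e^{-2s}) = \omega_{k,1}(2s)^{-1-1/k} + E(2s)$, with $|E(u)|\le C/u$. Then
\[
\mu_k(e^{-s}) = \omega_{k,1}s^{-1-1/k}\bigl(1 - 2\cdot 2^{-1-1/k}\bigr) + E(s) - 2E(2s).
\]
The coefficient is $1-2^{-1/k}$. The error satisfies $|E(s)-2E(2s)| \le C/s + 2C/(2s) = 2C/s$. This proves the first assertion, namely $\mu_k(e^{-s}) - \tilde\mu_k(e^{-s}) = O(1/s)$.

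For the second assertion, divide by $\eta_k(e^{-s})$ and use \eqref{eq:asymptotic of etak}. It gives $\eta_k(e^{-s}) \sim \bigl((1-2^{-1/k})\omega_{k,2}\bigr)^{1/2} s^{-1-1/(2k)}$, which is bounded below by a positive constant times $s^{-1-1/(2k)}$ for small $s$. The quotient is therefore $O(s^{-1}\cdot s^{1+1/(2k)}) = O(s^{1/(2k)})$, exactly as in Corollary~\ref{cor:mkasympt}.

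The only step needing care is the first one. One must make sure that the error bound in Lemma~\ref{lem:mk} is a uniform inequality $|E(u)|\le C/u$ valid for all small $u$ (not merely an $O$ at a single point), so that it can be applied at both $s$ and $2s$. The monotonicity argument in the proof of Lemma~\ref{lem:mk} provides exactly such an explicit inequality for every $s>0$. Everything else is bookkeeping.
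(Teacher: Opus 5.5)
Your proof is correct and is essentially the paper's argument: you substitute Lemma~\ref{lem:mk} at $s$ and at $2s$ into $\mu_k(e^{-s}) = m_k(e^{-s}) - 2m_k(e^{-2s})$, use $1-2\cdot 2^{-1-1/k} = 1-2^{-1/k}$, and divide by $\eta_k(e^{-s}) \asymp s^{-1-1/(2k)}$ from \eqref{eq:asymptotic of etak}. You also read the unscaled error in Lemma~\ref{lem:mk} correctly as $1/s$ (the paper's ``$\le s$'' there is a misprint), and your uniformity concern is harmless but unnecessary, since an $O(1/u)$ bound as $u\downarrow 0$ already holds on a whole interval $(0,u_0)$ and so applies at $u=2s$ once $s<u_0/2$.
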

\begin{proof} Using \eqref{eq: mean_Q_interms_P} and Lemma \ref{lem:mk}, that is, the asymptotic equality
\begin{equation}\label{eq:mkbis}
	m_k(e^{-s}) = \omega_{k,1}\,\frac{1}{s^{1+1/k}} + O\!\left({1}/{s}\right), \qquad \text{as } s\downarrow 0\,.
\end{equation}
we find that
\begin{align*}
\mu_k(e^{-s}) = \omega_{k,1}\,\frac{1}{s^{1+1/k}}+O(1/s)-2\,\omega_{k,1}\,\frac{1}{(2s)^{1+1/k}}+O(1/2s)\,, \quad \text{ as } s \downarrow 0\,,
\end{align*}
and therefore
\begin{align*}
	\mu_k(e^{-s}) = (1-2^{-1/k})\, \omega_{k,1}\,\frac{1}{s^{1+1/k}}+O(1/s)\,, \quad \text{ as } s \downarrow 0\,.
\end{align*}
The second part of the lemma follows from equation \eqref{eq:Gkasympt}.
\end{proof}

\medskip

If we solve the equation $\mu_{k}(e^{-s_{n}}) = n$, we find that
\begin{align}\label{eq:formula for sn de Qk}
s_n = \left(\frac{(1-2^{-1/k})\,\omega_{k,1}}{n}\right)^{\frac{k}{k+1}}\,, \quad \text{ for any } n \geq 1\,.
\end{align}
We will use this sequence $s_n$, and the corresponding $\tau_n = e^{-s_n}$, in what follows.

\subsection{Hayman conditions for $Q_k$.}

Next we verify that conditions (H1) and (H2) hold for $Q_k$. Since we have already seen that the variance $\eta^2_k(e^{-s})$ tends to $+\infty$ as $s \downarrow 0$, this completes the proof that $Q_k$ is a Hayman function.

\subsubsection{Condition {\upshape{(H1)}} for $Q_k$.}
The major-arc condition for \(Q_k\) follows the same scheme as for
\(P_k\): it suffices to control the third derivative of the fulcrum
\(G_k\) on the whole vertical line. The relation~\eqref{eq:QkPkrelation}
expresses \(G_k'''\) in terms of \(F_k'''\), so the bound for \(F_k\) carries over with the very same cut
function \(h(e^{-s})=s^\alpha\).
\smallskip

From the relation \eqref{eq:QkPkFulcrumrelation}, we deduce that
$$
G_k'''(-s+\imath \theta)=F_k'''(-s+\imath \theta)-8 F_k'''(-2s+2\imath  \theta)\, , \quad \mbox{for $s>0$ and $\theta \in \R$}\,.$$
Therefore, from \eqref{eq:F3boundbis}, we have that
$$|G_k'''(-s+\imath \theta)|\le F_k'''(-s)+8F_k'''(-2s)\, , \quad \mbox{for $s>0$ and $\theta \in \R$}\,.$$

\medskip
From Lemma \ref{lem:asympt derivatives Fk} it follows that
$$F_k'''(-s)+8F_k'''(-2s)\sim (1+2^{-1/k})F_k'''(-s)\, , \quad \mbox{as $s \downarrow 0$}\,,$$
which combined with \eqref{eq:derivativesPkQk} gives a constant $C$ and $s_0>0$ so that
$$
|G_k'''(-s+\imath \theta)|\le C F_k'''(-s)\, , \quad \mbox{for every $s \in (0, s_0)$ and every $\theta\in \R$}\,.$$

For any cut function $h$ we have that
$$
\sup_{\theta \in \R} |G_k'''(-s+\imath \theta)|\, h(e^{-s})^3\le C F_k'''(-s) h(e^{-s})^3\, , \quad \mbox{for $s \in (0, s_0)$}\,.$$

With the same cut function $h$ that we have prescribed above for $P_k$, i.e., $h(e^{-s})=s^\alpha$, with the exponent $\alpha$ in the interval  $(1+1/(3k),\; 1+1/(2k))$, see equation \eqref{eq:alphainterval}, we obtain, appealing to \eqref{eq:F3boundtris}, that
$$\lim_{s \downarrow 0} \sup_{\theta \in \R} |G_k'''(-s+\imath \theta)|\, h(e^{-s})^3=0\,,$$
which by Lemma \ref{lem:majorarc} implies that (H1) holds.

\

\subsubsection{Condition {\upshape{(H2)}} for $Q_k$.}

Lemma \ref{lem:cotas de G_k en terminos de W} combined with Lemma~\ref{lema:estimate of W}, which collects lower bounds of $W$, implies the following corollary analogous to Corollary \ref{cor:bounds for char Pk}.

\begin{corollary}\label{cor:bounds for char Qk}
For any constant $\widetilde{C}>0$ there are positive constants $\widetilde{c}_1$ and $\widetilde{c}_2$ depending
only on~$k$ and~$\widetilde{C}$ such that for $s\in(0,\ln 2)$ we have that
\[
\big|\E\!\big(e^{\imath\theta\breve{Y}^{[k]}_t}\big)\big| \le
\begin{cases}
e^{-\widetilde{c}_1\theta^2}, & \text{if } |\theta|\le \widetilde{C}\,\dfrac{1}{s^{1/(2k)}}\,,\\[8pt]
e^{-\widetilde{c}_2\frac{1}{s^{1/k}}}, & \text{if } |\theta|\ge \widetilde{C}\,\dfrac{1}{s^{1/(2k)}}\,.
\end{cases}
\]
\end{corollary}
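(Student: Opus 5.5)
The plan is to mimic the proof of Corollary~\ref{cor:bounds for char Pk}, replacing Lemma~\ref{lem:bounds of char de P} by Lemma~\ref{lem:bounds of char de Q} and $\sigma_k$ by $\eta_k$. We write $t=e^{-s}$ and $\phi=\theta/\eta_k(e^{-s})$. Since taking moduli removes the phase factor, we have
\[
\big|\E\!\big(e^{\imath\theta\breve{Y}^{[k]}_t}\big)\big| = \frac{|Q_k(e^{-s+\imath\phi})|}{Q_k(e^{-s})}\,.
\]

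\textbf{Step 1: bounds for $\eta_k$.} By \eqref{eq:asymptotic of etak}, $\eta_k(e^{-s})\,s^{1+1/(2k)}$ is continuous on $(0,\ln 2]$, positive, and has a positive finite limit as $s\downarrow 0$. Hence there are constants $0<\tilde a<\tilde A$ with
\[
\tilde a\le \eta_k(e^{-s})\,s^{1+1/(2k)}<\tilde A, \qquad s\in(0,\ln 2).
\]

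\textbf{Step 2: general thresholds.} We need the analogue of \eqref{eq:TWLgeneral} for $Q_k$, that is, Lemma~\ref{lem:bounds of char de Q} with the threshold $2\pi s$ replaced by overlapping thresholds $B_2 s<B_1 s$. This is the step needing the most care, because the paper only asserts \eqref{eq:TWLgeneral}. I would derive it directly from Lemma~\ref{lem:cotas de G_k en terminos de W} together with Lemma~\ref{lema:estimate of W}. Three cases arise.
\begin{itemize}
\item For $|\phi|\le\min(B_1,2\pi)s$, the first bound of Lemma~\ref{lema:estimate of W} applies directly.
\item If $B_1>2\pi$ and $2\pi s<|\phi|\le B_1 s$, then $\phi^2 s^{-(2+1/k)}\le B_1^2 s^{-1/k}$. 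So the bound $d_2 s^{-1/k}$ dominates $(d_2/B_1^2)\,\phi^2 s^{-(2+1/k)}$.
\item For $B_2 s<|\phi|\le 2\pi s$ (when $B_2<2\pi$), the first bound gives $d_1\phi^2 s^{-(2+1/k)}\ge d_1B_2^2\, s^{-1/k}$. For $|\phi|>2\pi s$ the second bound applies.
\end{itemize}
Taking minima of the resulting constants yields $D_1,D_2$ depending on $k,B_1,B_2$. This gives, for $|\phi|\le\pi$,
\[
\frac{|Q_k(e^{-s+\imath\phi})|}{Q_k(e^{-s})}\le
\begin{cases}
\exp\!\big(-D_1\phi^2 s^{-(2+1/k)}\big), & |\phi|\le B_1 s,\\[4pt]
\exp\!\big(-D_2 s^{-1/k}\big), & B_2 s<|\phi|\le\pi.
\end{cases}
\]
It suffices to consider $|\phi|\le\pi$, since the statement is only used for $|\theta|\le\pi\eta_k$ and the expression is $2\pi$-periodic in $\phi$. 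For $|\phi|$ in $(\pi,2\pi]$, one reduces modulo $2\pi$ using that $\|m^k\phi/(2\pi)\|$ is unchanged under $\phi\mapsto\phi-2\pi$ and under $\phi\mapsto-\phi$.

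\textbf{Step 3: conclusion.} Set $B_1=\widetilde C/\tilde a$ and $B_2=\widetilde C/\tilde A$, so that $B_2<B_1$, and take $\widetilde c_1=D_1/\tilde A^2$ and $\widetilde c_2=D_2$.
\begin{itemize}
\item If $|\theta|\le\widetilde C s^{-1/(2k)}$, then $|\phi|\le B_1 s$. Since $\eta_k^2 s^{2+1/k}<\tilde A^2$, we get $\phi^2 s^{-(2+1/k)}=\theta^2/(\eta_k^2 s^{2+1/k})\ge \theta^2/\tilde A^2$, which yields $e^{-\widetilde c_1\theta^2}$.
\item If $|\theta|\ge \widetilde C s^{-1/(2k)}$, then $|\phi|\ge B_2 s$, and the second bound yields $e^{-\widetilde c_2 s^{-1/k}}$.
\end{itemize}
The only genuine obstacle is the threshold adjustment in Step 2. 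Everything else is bookkeeping identical to the $P_k$ case.
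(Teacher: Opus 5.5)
Your proposal is correct and is essentially the paper's own argument. The paper simply combines Lemma~\ref{lem:cotas de G_k en terminos de W} with Lemma~\ref{lema:estimate of W} and reruns the proof of Corollary~\ref{cor:bounds for char Pk} (thresholds adjusted as in \eqref{eq:TWLgeneral}, two-sided bounds on $\eta_k$ from \eqref{eq:asymptotic of etak}); your Step~2 merely writes out the threshold adjustment that the paper only asserts.

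There are two caveats, both shared with the paper rather than specific to you. First, the bound only makes sense for $|\theta|\le\pi\eta_k(e^{-s})$, as you rightly impose; by periodicity it fails at $|\phi|=2\pi$, so reducing modulo $2\pi$ does not extend it. Second, Lemma~\ref{lema:estimate of W} as quoted cannot hold for $s>1/2$, where $I_s=\emptyset$ and $W_k(s,\cdot)\equiv 0$, so the argument really gives the bound only for small $s$, which is all that the verification of (H2) needs.
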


Corollary~\ref{cor:bounds for char Qk}  shows that $Q_k$ satisfies the same conclusion as Corollary~\ref{cor:bounds for char Pk}, just with different constants. Combining this with the comparability of the variance functions of Section \ref{seccion:comparability of variances}, we may deduce that condition (H2) for $Q_k$ follows exactly as condition (H2) for $P_k$ was obtained in the proof of Theorem \ref{thm:Hayman}.

\medskip

\emph{This completes the verification that $Q_k$ is in the Hayman class.}

\subsection{Asymptotic formula for $q_k(n)$.} Since $Q_k$ is in the Hayman class, $Q_k$ is strongly Gaussian, and because of Lemma \ref{lem:asympt to muk} we may apply the B\'aez-Duarte Theorem \ref{thm:baezduarte} to obtain an asymptotic formula for $q_k(n)$.

Substituting into the Báez-Duarte formula of Theorem \ref{thm:baezduarte}, the value \eqref{eq:formula for sn de Qk} for $s_n$, the mean asymptotics of Lemma~\ref{lem:asympt to muk}, the variance asymptotics for $\eta^2_k(e^{-s})$ and the asymptotics  for $Q_k(e^{-s})$ from Corollary \ref{cor: ln(Q_k)_asymptotic},  we obtain   the asymptotic formula for $q_k(n)$:
\begin{equation}\label{eq:qkasympt}
q_k(n) \sim \frac{1}{2\sqrt{\pi}} \cdot \frac{\Phi_k^{k/(2k+2)}}{(1+1/k)^{1/2}} \cdot \frac{1}{n^{(2k+1)/(2k+2)}}\, \exp\!\big\{(k+1)\,\Phi_k^{k/(k+1)}\, n^{1/(k+1)}\big\}\,, \quad \text{as } n\to\infty,
\end{equation}
where $\Phi_k = (1-2^{-1/k})\,\frac{1}{k}\,\zeta(1+1/k)\,\Gamma(1+1/k) = (1-2^{-1/k})\,\Omega_k$.

This asymptotic formula
can be traced back at least to the paper~\cite{RothSzekeres} of Roth and Szekeres, see also~\cite[Equation~23]{TranMurthyBhaduri} and~\cite{MurthyBrack}.

\subsection*{Acknowledgments and funding}

Jos\'e L.\ Fern\'andez thanks the Instituto Universitario de Matem\'aticas y Aplicaciones de la Universidad de La Laguna for its warm and
friendly hospitality during a visit made in September 2025 and also Fundaci\'on Akusmatika for generous support. Research of V.~J.\ Maci\'a was partially funded by grants PID2023-148028NB-I00 and
PID2021-123151NB-I00 from the Spanish Government. Part of this work was also supported by the Departamento de An\'alisis Matem\'atico de la Universidad de La Laguna,
through funding for several short research stays at Universidad Aut\'onoma de Madrid.

\end{document}